\newtheorem{Co}{Condition}[section]
\newtheorem{theorem}{Theorem}[section]
\newtheorem{lemma}[theorem]{Lemma}
\newtheorem{remark}[theorem]{Remark}
\newcommand\R{\mathbb{R}}
\newcommand{\be}{\begin{eqnarray}}
\newcommand{\ben}{\begin{eqnarray*}}
\newcommand{\en}{\end{eqnarray}}
\newcommand{\enn}{\end{eqnarray*}}
\newtheorem{lemm}[theorem]{Lemma}
\newtheorem{theo}[theorem]{Theorem}
\numberwithin{equation}{section}
\definecolor{rot}{rgb}{0.000,0.000,0.000}
\definecolor{blue}{rgb}{0.000,0.000,1.000}
\newcommand{\tcr}{\textcolor{rot}}
\title[Inverse source problem]{\tcr{Time-harmonic} elastic scattering by unbounded deterministic and random rough surfaces in three dimensions}
\author[G. Hu]{Guanghui Hu}\address{School of Mathematical Sciences and LPMC, Nankai University, 300071 Tianjin, China}
\email{ghhu@nankai.edu.cn}
\author[T. Wang]{Tianjiao Wang}\address{School of Mathematical Sciences, Zhejiang University, Hangzhou 310058, China}}\email{wangtianjiao@zju.edu.cn}
\author[X. Xu]{Xiang Xu}
\address{School of Mathematical Sciences, Zhejiang University, Hangzhou 310058, China}
\email{xxu@zju.edu.cn}
\author[Y. Zhao]{Yue Zhao}
\address{School of Mathematics and Statistics, and Key Lab NAA--MOE, Central China Normal University,
	Wuhan 430079, China}
\email{zhaoyueccnu@163.com}
\subjclass[2010]{35A15, 35P25, 74J20}
\keywords{elastic waves, rough surfaces, variational formulation, explicit {\it a priori} bounds}
\begin{document}

\begin{abstract}

In this paper, we investigate well-posedness of time-harmonic scattering of elastic waves by unbounded rigid rough surfaces in three dimensions. The elastic scattering is caused by an $L^2$ function with a compact support in the $x_3$-direction, and
both deterministic and random surfaces are investigated via  the variational approach.
The rough surface in a deterministic setting is assumed to be Lipschitz and lie
within a finite distance of a flat plane, and the scattering is caused by an inhomogeneous
term in the elastic wave equation whose support lies within some finite distance of the boundary. For the deterministic case, a stability estimate of elastic scattering by rough surface is shown at an arbitrary frequency.  It is noticed that all constants in {\it a priori} bounds are bounded by explicit functions of the frequency and geometry of rough surfaces. {\color{black} Furthermore, based on this explicit dependence on the frequency together with the measurability and $\mathbb{P}$-essentially separability of the randomness, we obtain a similar bound for the solution of the scattering by random surfaces.}
\end{abstract}

\maketitle

\section{Introduction}

This paper is concerned with the mathematical analysis of the time-harmonic elastic scattering from unbounded deterministic and random rough surfaces in three dimensions. The phrase rough means surface is a (usually nonlocal) perturbation of an infinite plane  such that the whole surface
lies within a finite distance of the original plane. Rough surface scattering problems have important applications in diverse
scientific areas such as remote sensing, geophysics, outdoor sound propagation,
radar techniques (see e.g.,\cite{Abubakar,A73} and the references cited therein). In linear elasticity, the  existence and uniqueness of solution were studied in via the boundary integral equation method  \cite{Aren99, a-siaa01,Aren02}. The variational approach was proposed in \cite{eh-2010, eh-mmmas12} to handle well-posedenss of the scattering problems in periodic
structures by using the Rayleigh expansion condition (REC) and in
\cite{EH-SIMA,eh-2015} for general rigid rough surfaces by using the angular
spectrum representation (ASR).

Recently, in \cite{hlz} a mathematical formulation of the elastic rough surface scattering problems was presented in three
dimensions. Based on a Rellich-type identity, the uniqueness of weak solutions to the variational  problem was proved
if the rigid surface was the graph of a uniformly Lipschitz continuous function. The existence of solutions was also proved for the case of locally perturbed scattering
problems. However, the well-posedness problem for the scattering by a general rough surface remains unsolved.
Later, the authors in \cite{WLX} further derived an {\it a priori} bound which was explicitly dependent on the frequency.
The main goal of this paper is three-fold. First, we present a variational formulation of the elastic scattering in three dimensions by a Lipschitz-type rough surface and prove its well-posedness.
Second, we derive an {\it a priori} bound which is explicitly dependent on the frequency. {\color{black}Third, we utilize the explicit bound to derive the well-posedness for scattering by random rough surfaces as \cite{BLX,WLX}.}
As discussed in \cite{CM}, we expect that the variational formulation will be suitable for numerical solution via finite
element discretization. Furthermore, the explicit bounds we obtain should be useful in establishing
the dependence of the constants in {\it a priori} error estimates for finite element schemes on the frequency and the geometry of the domain.

This paper utilizes methods and results contained in \cite{EH-SIMA, hlz, WLX}. As was pointed out in \cite{EH-SIMA}, the elastic problem is more complicated than the acoustic case
due to the coexistence of compressional and shear waves. As a consequence, the Dirichlet-to-Neumann map for the elastic wave equation is tensor-valued which
does not have a definite real part. This brings difficulties in deriving the {\it a priori} estimates of solutions via Rellich identities for arbitrary frequencies.
We prove that the variational problem is well-posed by the theory of semi-Fredholm used in \cite{EH-SIMA}. To this end, we first consider the case of small
frequencies in which the Lax--Milgram theorem can be applied. Then we establish several {\it a priori} estimates. During this process,
we carefully trace the dependence of the coefficients of these bounds on the frequency. In this way, we arrive at an {\it a priori} bound for the solution to the variational problem
 which is explicitly dependent on the frequency. {\color{black} Afterwords, inspired by the framework for scattering by random medium in \cite{PS} and random surfaces in \cite{BLX, WLX}, we can obtain the well-posedness for  a stochastic variation problem with an explicit {\it a priori} bound.}

 The rest of this paper is outlined as follows. In Section \ref{pf} we present the variational formulation for the elastic scattering problem.
 Section \ref{small} is devoted to the well-posedness of the variational problem for small frequencies.
 In Section \ref{bound} we derive {\it a priori} bounds and trace the explicit dependence on the frequency and on the geometry of the domain.
{\color{black} For random cases, a similar bound is derived in Section \ref{random}. Conclusions are presented in Section \ref{conclusion}.}

\section{Problem formulation}\label{pf}

This section is devoted to the mathematical formulation of the three-dimensional elastic wave scattering by unbounded rigid rough surfaces. Let $D \subset \mathbb R^3$ be an unbounded connected open set such that, for some constants $m<M$,
\[
U_{M}\subset D \subset U_{m}, \quad U_h : = \{x= (x^{\prime},x_3): x_3>h\}, \quad x^{\prime}:=(x_1, x_2).
\]
The space $D$ is supposed to be filled with a homogeneous and isotropic elastic medium with unit mass density.
 We assume that $\Gamma:= \partial D$ is an unbounded rough surface, which is \tcr{supposed to be} the graph of a uniformly Lipschitz continuous function $f$. \tcr{More precisely, we assume}
\[
\Gamma = \{x\in \mathbb R^3: x_3 = f(x^{\prime}), \, x^{\prime}=(x_1, x_2) \in \mathbb R^2\},
\]
and there exists a constant $L>0$ such that
\begin{align}\label{2.1}
|f(x^{\prime}) - f(y^{\prime})|\leq L\,|x^{\prime} - y^{\prime}|  \quad \mbox{for all}\quad x^{\prime}, y^{\prime} \in \mathbb R^2.
\end{align}
Throughout the paper we fix some $h>M$. Let $\Gamma_h= \{x\in \mathbb R^3: x_3 = h\}$ and $S_h= D \backslash \overline{U}_h$. Denote the unit normal vector on $\Gamma \cup \Gamma_h$ by $\nu: = (\nu_1, \nu_2, \nu_3)$ pointing into the region of $x_3>h$ on $\Gamma_h$ and into the exterior of $D$ on $\Gamma$.
Assume that $g \in L^2(D)^3$ is an elastic source term with $\text{supp}(g)\subset S_h$. Consider the following Navier equation in three dimensions
\begin{align}\label{2.2}
\Delta^* u +
\omega^2{u}=g\quad\text{in}~ D,
\end{align}
where $\Delta^* = \mu\Delta + (\lambda+\mu)\nabla \nabla\cdot$,
$u = (u_1, u_2, u_3)^\top$ is the elastic displacement and
$\omega>0$ is the angular frequency. Here $\lambda$ and $\mu$ denote the Lam\'{e} constants characterizing the medium above $\Gamma$ satisfying $\mu>0, \lambda +2\mu/3>0$. \tcr{Since $\Gamma$ is physically rigid, there holds} the Dirichlet boundary condition
\begin{equation}\label{2.3}
u = 0\quad\text{on}~ \Gamma.
\end{equation}

As the domain $D$ is unbounded, \tcr{ a proper radiation condition should be imposed on $u$ at infinity. In this paper
we utilize} the elastic \tcr{Upward Propagation Radiation Condition} (UPRC) at infinity to ensure the well-posedness of the boundary value problem \eqref{2.2}-\eqref{2.3}. Below we  briefly introduce this radiation condition and refer to \cite{hlz,EH-SIMA} for  the details.
We begin with the decomposition of the \tcr{wave fields} into a sum of compressional and shear parts
\begin{align}\label{hd}
\tcr{u}  = \frac{1}{\rm i}(\nabla \varphi + \nabla \times  \psi), \quad \nabla \cdot  \psi = 0\quad\tcr{\mbox{in}\quad x_3>h},
\end{align}
where the scalar function $\varphi$ and the vector function $\psi$ satisfy the homogeneous Helmholtz equations
\begin{equation}\label{hlm}
\tcr{ \Delta\varphi+ k^2_{\rm p} \varphi=0, \quad \Delta  \psi+ k^2_{\rm s}  \psi= 0 \quad \text{in}\quad x_3>h.}
\end{equation}
\tcr{Here, $k_{\rm p}$ and $k_{\rm s}$ are compressional and shear wave numbers, respectively, defined by
\ben
k_{\rm p}:=\frac{\omega}{\sqrt{\lambda+2\mu}},\quad k_{\rm s}:=\frac{\omega}{\sqrt{\mu}}.
\enn
}
Denote by $\hat{v}$ the Fourier transform of $v$ in $\mathbb R^2$, i.e.,
\[
\hat{v}( \xi) =\mathcal{F}v(\xi):=\frac{1}{2\pi} \int_{\mathbb R^2} v(x^{\prime}) e^{-{\rm i}x^{\prime}\cdot  \xi}{\rm d}x^{\prime},\quad \xi=(\xi_1, \xi_2)\in \R^2.
\]
Taking the Fourier transform of \tcr{\eqref{hlm}} and assuming that $\varphi, \psi$ fulfill the \tcr{Upward Angular Spectrum Representation} (UASR) of the Helmholtz equation in $U_h$ (see \cite{CM}), we obtain
for $x_3\geq h$ that
\be\label{re}
\varphi(x^{\prime},x_3)&=\frac{1}{2\pi} \int_{\mathbb R^2} \hat{\varphi} ( \xi,h) e^{{\rm i}\beta ( \xi)(x_3-h)}
e^{{\rm i} \xi \cdot x^{\prime}}{\rm d} \xi,\notag\\
 \psi(x^{\prime},x_3)&= \frac{1}{2\pi}\int_{\mathbb R^2} \hat{ \psi} ( \xi,h) e^{{\rm i}\gamma ( \xi)(x_3-h)}
e^{{\rm i} \xi \cdot x^{\prime}}{\rm d} \xi,
\en
where
\begin{align*}
 \beta( \xi):=
 \begin{cases}
   ( k^2_{\rm p} - | \xi|^2)^{1/2},\quad
&| \xi|< k_{\rm p},\\
  {\rm i} (| \xi|^2 -  k^2_{\rm p})^{1/2},\quad
&| \xi|> k_{\rm p},
 \end{cases}
\end{align*}
and
\begin{align*}
 \gamma( \xi):=
 \begin{cases}
   ( k^2_{\rm s} - | \xi|^2)^{1/2},\quad
&| \xi|< k_{\rm s},\\
  {\rm i} (| \xi|^2 -  k^2_{\rm s})^{1/2},\quad
&| \xi|> k_{\rm s}.
 \end{cases}
\end{align*}
Denote \tcr{the Fourier transform of $\varphi(x',h)$ and $\psi(x',h)$ by }
\begin{align*}
A_{{\rm p}}(\xi) = \hat{\varphi} ( \xi,h), \quad \tilde{\boldsymbol A_{{\rm s}}}(\xi) = \hat{ \psi} ( \xi,h),
\end{align*}
\tcr{respectively.
Noting that $\mbox{div}\, \psi=0$, we have $(\xi, \gamma(\xi))\cdot  \tilde{\boldsymbol A_{\rm s}}(\xi)^{\top}=0$. For notational convenience we omit the dependence of $\beta$ and $\gamma$ on $\xi$ in the subsequent context.}

Substituting \eqref{re} into \eqref{hd}, we obtain for $x_3\geq h$ that
\begin{align}\label{uprc1}
\tcr{u}(x) =\frac{1}{2\pi} \int_{\mathbb R^2} \left[ A_{\rm p}(\xi)\,(\xi,\beta)^{\top} e^{{\rm i}\beta (x_3-h)}
 + \boldsymbol A_{{\rm s}}(\xi)\, e^{{\rm i}\gamma (x_3-h)}\right]
e^{{\rm i} \xi \cdot x^{\prime}}{\rm d} \xi,
\end{align}
where  $\boldsymbol A_{{\rm s}} = (A^{(1)}_{\rm s}, A^{(2)}_{\rm s}, A^{(3)}_{\rm s})^{\top}(\xi):=(\xi,\gamma)^{\top}\times \tilde{\boldsymbol A_{{\rm s}}}(\xi)$. It follows from \eqref{uprc1} and the orthogonality $(\xi, \gamma)\cdot \boldsymbol A_{\rm s}^{\top}=0$ that
\begin{align*}
\begin{bmatrix}
\tcr{\hat{u}}(\xi, h) \\ 0
\end{bmatrix}
 =
\begin{bmatrix}
\xi_1 & 1 & 0 & 0\\
\xi_2 & 0 & 1 & 0\\
\beta & 0 & 0 & 1 \\
0 & \xi_1 & \xi_2 & \gamma
\end{bmatrix}
\begin{bmatrix}
A_{\rm p}(\xi)\\[5pt]
\boldsymbol A_{\rm s}(\xi)
\end{bmatrix}
:=\widetilde{\mathbb{D}}(\xi)\,\boldsymbol A(\xi),
\end{align*}
which gives
\be\label{A}
\boldsymbol A(\xi)=\begin{bmatrix}
A_{\rm p}\\[5pt]
\boldsymbol A_{\rm s}
\end{bmatrix}(\xi)=\widetilde{\mathbb{D}}^{-1}(\xi)\,
\begin{bmatrix}\tcr{\hat{u}}(\xi,h) \\ 0 \end{bmatrix}
=\mathbb{D}(\xi)\, \tcr{\hat{u}}(\xi,h).
\en
Here $\mathbb{D}$ is a $4\times 3$ matrix given by
\ben
\mathbb{D}(\xi)=\frac{1}{\beta \gamma+|\xi|^2}\begin{bmatrix}
\xi_1 & \xi_2 & \gamma \\
\beta \gamma+\xi_2^2 & -\xi_1\xi_2 & -\xi_1\gamma \\
-\xi_1\xi_2 & \beta\gamma+\xi^2 & -\xi_2 \gamma \\
-\xi_1\beta & -\xi_2 \beta & |\xi|^2
\end{bmatrix}.
\enn
Using \eqref{uprc1}--\eqref{A} yields \tcr{the expression of $u$ in $U_h$}: \begin{align}\label{uprc}
 \tcr{u}(x)=\frac{1}{2\pi}\int_{\mathbb R^2}\Big\{
\frac{1}{\beta\,\gamma+| \xi|^2}  \Big(M_{\rm p}(\xi) e^{{\rm i} (\xi\cdot x^{\prime}+ \beta (x_3-h))} + M_{\rm s}(\xi) e^{{\rm i} (\xi\cdot x^{\prime}+\gamma (x_3-h))}  \Big)   \hat{ u}^{\rm sc}(\xi, h)
  \Big\} {\rm d} \xi,
\end{align}
where
\begin{align*}
M_{\rm p}(\xi)=:\begin{bmatrix}
\xi_1^2        &  \xi_1\xi_2    & \xi_1 \gamma\\
\xi_1\xi_2     & \xi_2^2        & \xi_2 \gamma\\
\xi_1\beta  & \xi_2\beta  & \beta \gamma
\end{bmatrix}
 \mbox{ and }
M_{\rm s}(\xi)=
\begin{bmatrix}
\beta\gamma+\xi_2^2 & -\xi_1\xi_2               & -\gamma \xi_1\\
-\xi_1 \xi_2             & \beta\gamma+\xi_1^2  & -\gamma \xi_2\\
-\xi_1\beta           & -\xi_2\beta            & |\xi|^2
\end{bmatrix}.
\end{align*}
The representation \eqref{uprc} \tcr{will be referred to as the upward radiation condition  for rough surface scattering problems in linear elasticity}.

Define \tcr{the surface traction operator}
\begin{align}\label{3.2}
Tu:=2\mu\partial_{\nu}u + \lambda (\nabla\cdot u)\nu + \mu \nu\times (\nabla\times u),
\end{align}
where $\nu=(\nu_1,\nu_2,\nu_3)$ stands for the normal vector on the surface. Plugging \eqref{uprc} into \eqref{3.2} yields the Dirichlet-to-Neumann (DtN) operator
on $\Gamma_h$ (cf \cite{hlz})
\be\label{DtN}
Tu = \mathcal{T} u(x'):=\frac{{\rm i}}{2\pi}
\int_{\mathbb R^2}M( \xi)
\hat{u}(\xi) e^{{\rm i}  \xi \cdot x^{\prime}}{\rm d} \xi,
\en
where $\mathcal{M}(\xi)$ is given by
\begin{align*}
\mathcal{M}(\xi)&=\frac{1}{|\xi|^2+\beta\gamma}\\
&\quad \times
\begin{bmatrix}
\mu[(\gamma - \beta)\xi_2^2 +  k_{\rm s}^2\beta] & -\mu\xi_1\xi_2(\gamma - \beta) & (2\mu| \xi|^2 - \omega^2 + 2\mu\beta \gamma)\xi_1 \\[5pt]
-\mu\xi_1\xi_2(\gamma - \beta) & \mu[(\gamma - \beta)\xi_1^2 +  k_{\rm s}^2\beta] & (2\mu| \xi|^2 - \omega^2 + 2\mu\beta\gamma)\xi_2\\[5pt]
- (2\mu| \xi|^2 - \omega^2 + 2\mu\beta \gamma)\xi_1 & -(2\mu| \xi|^2 - \omega^2 + 2\mu\beta\gamma)\xi_2 & \gamma \omega^2
\end{bmatrix}.
\end{align*}
The boundary operator $\mathcal{T}$ is non-local and is equivalent to the upward radiation condition (\ref{uprc}).
It is also called the transparent boundary condition (TBC) for time-harmonic scattering problems in unbounded domains.

Based on the above DtN operator, the wave scattering problem \eqref{2.2}-\eqref{2.3} can be reduced to \tcr{a boundary value problem over} $S_h$:
\begin{align*}
\mu\Delta{u} + (\lambda+\mu)\nabla \nabla\cdot{u} +
\omega^2{u}=g\quad&\text{in}~ S_h\\
u = 0\quad&\text{on}~ \Gamma\\
Tu = \mathcal{T} u\quad &\text{on}~ \Gamma_h.
\end{align*}

To introduce the variational formulation, we introduce the energy space $V_h$ for $h>M$ as the closure of $C_0^\infty(S_h \cup \Gamma_h)^3$ in the $H^1$ norm
\[
\|u\|_{V_h} = (\|\nabla u\|^2_{L^2(S_h)^3} + \|u\|_{L^2(S_h)^3})^{1/2}.
\]
Multiplying the Navier equation in \eqref{2.2} by the complex conjugate of the test function $v\in V_h$ and using Betti's formula yield
\ben\label{3.161}
\int_{S_h} \mathcal{E}(u, \bar{v}) - \omega^2  u \cdot  \bar{v}~{\rm d}  x - \int_{\Gamma_h}  \bar{v}\cdot T {u} {\rm d}s = \int_{S_h}g\cdot\bar{v}~{\rm d}x,
\enn
where the bilinear form $\mathcal{E}(\cdot,\cdot)$ is defined by
\ben
\mathcal{E}(u,  v) := 2\mu \sum_{j,k = 1}^3 \partial_k u_j \partial_k v_j + \lambda \nabla \cdot  u \nabla \cdot v - \mu \nabla \times  u \cdot \nabla \times  v,\quad\forall u, v\in V_h.
\enn
Define the sesquilinear  form $B: V_h\times V_h \rightarrow \mathbb{C}$ by
\begin{align}\label{3.17}
B(u,v) = \int_{S_h} \mathcal{E}(u, \bar{v}) - \omega^2  u \cdot  \bar{v}~{\rm d}  x - \int_{\Gamma_h}  \bar{v}\cdot \mathcal{T} {u} {\rm d}s.
\end{align}
Now we can formulate the variational problem as follows:

{\it Variational Problem I:}  find $u\in V_h$ such that
\begin{align}\label{3.16}
B(\tcr{u},v) = -\int_{S_h}g\cdot\bar{v}~{\rm d}x \quad \text{for all} \,\, v\in V_h.
\end{align}

The variational problem is equivalent to the boundary value problem: given $g\in L^2(D)^3$, with $\text{supp}(g)\subset S_h$ for some $h>M$, find $u\in H^1_{loc}(D)^3$ such that $u|_{S_h}\in V_h$ for every $h>M$ (implying $u=0$ on $\Gamma$), the Navier equation $(\triangle^* + \omega^2)~u = g$ in $D$ holds in a distributional sense, and the radiation condition \eqref{uprc} is satisfied with $u|_{\Gamma_h} \in H^{1/2}(\Gamma_h)^3$ by the trace theorem.

The main theorem of this paper can now be stated as follows.
\begin{theo}\label{thm}
For any $\omega>0$, the Variational Problem I (\ref{3.16}) is uniquely solvable in $V_h$. Moreover, there exists a constant $C$ independent of $\omega, h$ and the Lipschitz constant $L$ of $f$
such that the solution satisfies the estimate
\begin{align}\label{est}
\|u\|_{V_h}\leq (h-m+2)\big(C_4(\omega,h)+C_5(\omega, h)^2+C_6(\omega,h,L)\big)\;\|g\|_{V_h}
\end{align}
where
\[
C_4(\omega, h)= C(h+1-m)\omega,\quad
C_5=C\sqrt{1+\omega^{-1}}C_3(\omega,h)
    \]
    and\[
C_6=C(\omega^{-1}+1)C_1(\omega,h,L)
C_2(\omega,h,L)^2.
    \]
Here
\begin{align*}
C_1(\omega,h,L)&=C\omega^3(1+L^2)^{1/2}(h-m+1),\\
 C_2(\omega, h, L)&=C(1+L^2)^{1/4}\sqrt{h+1-m}(1+\omega (h+1-m)),\\
 C_3(\omega ,h)&=C(h+1-m)(1+\omega (h+1-m))^2/\omega.
\end{align*}
\end{theo}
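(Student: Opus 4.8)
The plan is to follow the semi-Fredholm strategy of \cite{EH-SIMA}: combine a priori estimates that hold for \emph{all} $\omega>0$ with unique solvability at small frequencies, and then propagate solvability upward in $\omega$ by a continuation/Fredholm argument. Concretely, I would first establish (in the spirit of Section~\ref{small}) that for $\omega$ below some threshold $\omega_0$ depending only on the geometry, the sesquilinear form $B$ is coercive on $V_h$ after a suitable sign manipulation of the DtN term, so that the Lax--Milgram theorem yields unique solvability and a bound there. The key structural fact to exploit is that, although $\mathcal{M}(\xi)$ is not sign-definite, its ``bad'' part (the propagating modes $|\xi|<k_s$) lives in a compact frequency band, so $-\int_{\Gamma_h}\bar v\cdot\mathcal T u\,ds$ decomposes into a part with the right sign plus a finite-rank/compact perturbation. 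This is precisely what makes $B$ a compact perturbation of a coercive form and hence the problem semi-Fredholm of index zero; uniqueness then gives existence for every $\omega$.

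The heart of the matter is the a priori bound \eqref{est} with the stated explicit $\omega$- and geometry-dependence. I would derive it via a Rellich-type identity, testing the Navier equation against $(x_3-h)\partial_3 u$ (or a truncated/regularized version thereof, handled on a strip $S_h$ and passed to the limit using the radiation condition \eqref{uprc}). Integrating by parts produces: (i) volume terms controlling $\|\nabla u\|_{L^2(S_h)}$ and $\omega\|u\|_{L^2(S_h)}$ up to the source; (ii) a boundary term on $\Gamma$ that has a favorable sign because $u=0$ there forces $\partial_\nu u$ to be along $\nu$, and the Lipschitz graph condition \eqref{2.1} controls the geometric weight by $(1+L^2)^{1/2}$; and (iii) a boundary term on $\Gamma_h$ involving the Fourier multiplier, which I must bound from below modulo the finite band of propagating modes. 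The bookkeeping here is what generates the nested constants $C_1,\dots,C_6$: each integration by parts contributes a factor $(h-m+1)$ from the weight $(x_3-h)$, each application of Cauchy--Schwarz with an $\omega$-weighted splitting contributes $\omega$ or $\omega^{-1}$, trace inequalities on $\Gamma_h$ contribute $(1+\omega(h+1-m))$, and the Lipschitz constant enters only through the geometric Jacobian on $\Gamma$ as powers of $(1+L^2)$. I would carry these factors symbolically through every inequality rather than absorbing them into a generic constant, which is the only way to land on the precise form of $C_4,C_5,C_6$ stated in the theorem.

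The main obstacle I anticipate is controlling the $\Gamma_h$ boundary contribution in the Rellich identity: because the DtN symbol $\mathcal{M}(\xi)$ lacks a definite real part, the naive boundary term is not sign-definite, and one must instead show that $\mathrm{Re}\!\left((x_3-h)\text{-weighted flux through }\Gamma_h\right)$ differs from a nonnegative quantity by something controlled by the \emph{propagating} Fourier modes $|\xi|<k_s$ only. On that bounded annulus one can estimate $\beta,\gamma,|\xi|$ by powers of $\omega$, which is exactly where the $\omega^3$ in $C_1$ and the $(1+\omega(h+1-m))$ factors in $C_2,C_3$ originate; but making this rigorous requires a careful algebraic analysis of $\mathcal{M}(\xi)$ and of the quadratic form $\xi\mapsto \overline{\hat u}(\xi)^\top\mathcal M(\xi)\hat u(\xi)$, mirroring the acoustic computation in \cite{EH-SIMA} but now tensor-valued. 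A secondary technical point is justifying the Rellich test function rigorously in $H^1_{loc}$ — this I would handle by the standard truncation-in-$x_3$ and mollification argument, using \eqref{uprc} to kill the artificial boundary terms at $x_3\to\infty$. Once (iii) is tamed, combining it with (i) and (ii) and feeding in the small-frequency bound for the compact part yields \eqref{est}.
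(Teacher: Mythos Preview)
Your high-level strategy---semi-Fredholm continuation from small frequencies combined with a Rellich-identity a~priori bound---matches the paper. But your plan for taming the $\Gamma_h$ boundary term has a genuine gap, and it is exactly the step that produces $C_2$ and $C_3$.

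You propose to control the propagating-mode contribution on $\Gamma_h$ by a direct algebraic analysis of $\mathcal{M}(\xi)$ on the band $|\xi|\lesssim k_s$. That analysis only tells you $\|\mathcal{M}(\xi)\|\le C\omega$ there; it does \emph{not} bound $\int_{|\xi|\le K\omega}|\hat u(\xi,h)|^2\,d\xi$ itself, which is what you need to close the Rellich estimate. The paper closes this loop by an indirect route you have not identified: it uses the Helmholtz decomposition $u=\tfrac{1}{i}(\nabla\varphi+\nabla\times\psi)$, so that $\varphi\propto\nabla\cdot u$ and $\psi\propto\nabla\times u$ each satisfy a \emph{scalar} Helmholtz equation on the strip. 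A first Rellich identity (with unweighted multiplier $\partial_3\bar u$) bounds $\|\nabla\cdot u\|_{L^2(\Gamma)}$ and $\|\nabla\times u\|_{L^2(\Gamma)}$ on the \emph{rough} boundary in terms of $\|g\|\,\|\partial_3 u\|$---this is where $C_1$ and the factor $(1+L^2)^{1/2}$ actually come from. Then a duality argument (solve an auxiliary Helmholtz problem on $S_H$ with homogeneous Dirichlet data on $\Gamma$ and right-hand side $\bar w$, and invoke the acoustic a~priori bound of Chandler-Wilde--Monk) transfers these $\Gamma$-trace bounds to $L^2(S_H)$ and $L^2(\Gamma_H)$ bounds on $\nabla\cdot u$ and $\nabla\times u$. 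It is this auxiliary acoustic problem that generates $C_2=C(1+L^2)^{1/4}\sqrt{H-m}(1+\omega(H-m))$ and $C_3=C(H-m)(1+\omega(H-m))^2/\omega$; they are not trace-inequality artifacts. Only after this step can the low-frequency part of $\hat u$ on $\Gamma_H$ be controlled via $|\hat u_H|^2\lesssim k_s^2(|A_p|^2+|\boldsymbol A_s|^2)\lesssim |\widehat{\nabla\cdot u}|^2+|\widehat{\nabla\times u}|^2$, and the weighted Rellich identity (with multiplier $(x_3-m)\partial_3\bar u$, not $(x_3-h)\partial_3\bar u$---the weight must be positive on $\Gamma$) then yields the final bound.

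A smaller point: your description of the propagating-mode piece as a ``finite-rank/compact perturbation'' is not correct in this unbounded setting---$H^1(S_h)\hookrightarrow L^2(S_h)$ is not compact on an infinite strip, so the Fredholm alternative does not apply directly. The paper relies on the more delicate semi-Fredholm machinery of \cite{EH-SIMA}, where the a~priori bound itself (valid for all $\omega$) plus solvability at small $\omega$ are what propagate solvability to all frequencies.
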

The constants $C_1$-$C_6$ are derived from {\it a priori} bounds of the variational solution,
which exhibit explicit dependence on the frequency $\omega$ and the geometry of the rough surface. They lead to the explicit {\it a priori} bound of the solution of the elastic scattering problem in three dimensions.

By the semi-Fredholm theory in \cite{EH-SIMA},
the results of Theorem \ref{thm} follow from the well-posedness of the variational problem at small frequencies (cf Theorem \ref{small})
and an {\it a priori} bound of the solution to the variational problem at an arbitrary frequency (cf Theorem \ref{lemma5}).
Thus, in the subsequent two sections we shall focus on mathematical analysis at small frequencies and {\it a priori} estimate at an arbitrary frequency.

\section{Analysis of the variational problem for small frequency}

We first investigate mapping properties the DtN operator in three dimensions.
For a matrix $\mathcal{M}(\xi)\in \mathbb{C}^{3\times 3}$ depending on $\xi$, let ${\rm Re}\mathcal{M}(\xi) := (\mathcal{M}(\xi) + \mathcal{M}(\xi)^*)/2.$ We shall write ${\rm Re}\mathcal{M}(\xi)>0$ if ${\rm Re}\mathcal{M}(\xi)$ is positive definite. Here $M^*(\xi)$ is the adjoint of $M$ with respect to the scalar product $(\cdot,\cdot)_{\mathbb{C}^{3\times 3}}$ in $\mathbb{C}^{3\times 3}$.
\begin{lemm}\label{dtn}
Let $\mathcal{M}(\xi)$ be defined in \eqref{DtN} and let $h>M$.
\begin{enumerate}
\item There exists a constant $K$ independent of $\omega$ such that ${\rm Re}(-{\rm i}M)(\xi)>0$ for all $|\xi|>K\omega$, where
\ben
\tcr{K=\frac{\lambda+2\mu}{\mu\sqrt{\lambda+\mu}}}>\frac{1}{\sqrt{\mu}}.
\enn
\item The DtN map $\mathcal{T}$ is a bounded operator from $H^{1/2}(\Gamma_h)^3$ to $H^{-1/2}(\Gamma_h)^3$.
\item For $ |\xi|<K\omega$ there holds
\begin{align}\label{4.1}
\|\mathcal{M}(\xi)\| \le C_K\, \omega
\end{align}
where \tcr{\[C_K=2(\lambda+4\mu)K+(\mu (\lambda+2\mu) K^2 +2 (\lambda+2\mu)/\mu)\sqrt{\frac{\lambda+\mu}{\mu(\lambda+2\mu)}}\]} is a constant independent of $\omega,\xi$ and the norm
\[
\|\mathcal{M}(\xi)\|=\max_{1\leq  i, j\leq 3} |M_{ij}(\xi)|.
\]
Here $K$ is the constant specified in item (1)
and $M_{ij}(\xi), 1\leq  i, j\leq 3$ denote the entries of $\mathcal{M}(\xi)$.
\end{enumerate}
\end{lemm}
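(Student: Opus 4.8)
The plan is to prove the three assertions separately, directly from the explicit symbol $\mathcal{M}(\xi)$; the bulk of the work is in item~(1). Since $K>1/\sqrt{\mu}$, the set $\{|\xi|>K\omega\}$ lies in the evanescent regime $|\xi|>k_{\rm s}>k_{\rm p}$, so $\beta={\rm i}\beta_I$, $\gamma={\rm i}\gamma_I$ with $\beta_I=(|\xi|^2-k_{\rm p}^2)^{1/2}>\gamma_I=(|\xi|^2-k_{\rm s}^2)^{1/2}>0$, hence $\beta\gamma=-\beta_I\gamma_I$ and $|\xi|^2+\beta\gamma=|\xi|^2-\beta_I\gamma_I\ge k_{\rm p}^2>0$ is real (using $\beta_I\gamma_I\le\beta_I^2=|\xi|^2-k_{\rm p}^2$). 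A direct inspection of the entries shows that $-{\rm i}\mathcal{M}(\xi)$ is Hermitian in this regime, so ${\rm Re}(-{\rm i}\mathcal{M})=-{\rm i}\mathcal{M}$, and what must be shown is positive definiteness. I would use the rotational covariance $\widetilde{R}\,\mathcal{M}(\xi)\,\widetilde{R}^{-1}=\mathcal{M}(R\xi)$ for $R\in SO(2)$, $\widetilde{R}={\rm diag}(R,1)$ (checked directly on the horizontal block $\frac{\mu}{|\xi|^2+\beta\gamma}\big[(\gamma-\beta)(|\xi|^2 I_2-\xi\xi^{\top})+k_{\rm s}^2\beta I_2\big]$ and the other blocks) to reduce to $\xi=|\xi|e_1$, where $-{\rm i}\mathcal{M}$ is block diagonal: a scalar along $\xi^{\perp}$ equal to $\mu\big[(\gamma_I-\beta_I)|\xi|^2+k_{\rm s}^2\beta_I\big]/(|\xi|^2-\beta_I\gamma_I)$, and, in the $\{\xi/|\xi|,e_3\}$--plane, the Hermitian block
\[
\frac{1}{|\xi|^2-\beta_I\gamma_I}\begin{bmatrix}\omega^2\beta_I & -{\rm i}\,b|\xi|\\[2pt] {\rm i}\,b|\xi| & \omega^2\gamma_I\end{bmatrix},\qquad b:=2\mu(|\xi|^2-\beta_I\gamma_I)-\omega^2,
\]
where $\mu k_{\rm s}^2=\omega^2$ has been used. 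Via $\beta_I-\gamma_I=(k_{\rm s}^2-k_{\rm p}^2)/(\beta_I+\gamma_I)$ and $\beta_I(\beta_I+\gamma_I)>\beta_I^2$, positivity of the scalar part reduces to $k_{\rm p}^2(|\xi|^2-k_{\rm s}^2)>0$ and so holds for all $|\xi|>k_{\rm s}$; positivity of the $2\times2$ block, its diagonal being positive, reduces to the determinant inequality $\omega^4\beta_I\gamma_I>b^2|\xi|^2$. The crude bound $|b|\le\omega^2$ is insufficient (it degenerates as $|\xi|\to k_{\rm s}$, which is the Rayleigh pole of the elastic DtN map), so I would instead estimate $b$ and $\beta_I\gamma_I$ carefully in terms of $|\xi|,k_{\rm p},k_{\rm s}$ and verify that the inequality becomes valid exactly once $|\xi|>K\omega$ with $K=(\lambda+2\mu)/(\mu\sqrt{\lambda+\mu})$, i.e.\ once $|\xi|$ is pushed beyond the Rayleigh wavenumber. \emph{This determinant estimate is the main obstacle.}

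For item~(3), on $|\xi|<K\omega$ I would bound each of the six distinct entries of $\mathcal{M}(\xi)$ using: (a) $|\,|\xi|^2+\beta\gamma\,|\ge k_{\rm p}^2=\omega^2/(\lambda+2\mu)$ — for $|\xi|<k_{\rm p}$ write $|\xi|^2+\beta\gamma=k_{\rm p}^2+\beta(\gamma-\beta)\ge k_{\rm p}^2$; for $k_{\rm p}<|\xi|<k_{\rm s}$ use $|\,|\xi|^2+{\rm i}\beta_I\gamma\,|\ge|\xi|^2\ge k_{\rm p}^2$; for $|\xi|>k_{\rm s}$ use $|\xi|^2-\beta_I\gamma_I\ge k_{\rm p}^2$; (b) $|\gamma-\beta|\le(k_{\rm s}^2-k_{\rm p}^2)^{1/2}=\omega\sqrt{(\lambda+\mu)/(\mu(\lambda+2\mu))}$ in every regime, since $|\gamma-\beta|^2=k_{\rm s}^2-k_{\rm p}^2$ on $k_{\rm p}<|\xi|<k_{\rm s}$ while on the other two ranges $|\gamma-\beta|=(k_{\rm s}^2-k_{\rm p}^2)/(\beta+\gamma)$ resp.\ $(k_{\rm s}^2-k_{\rm p}^2)/(\beta_I+\gamma_I)$ with the sum bounded below by $(k_{\rm s}^2-k_{\rm p}^2)^{1/2}$; (c) $|\beta|\le\max(k_{\rm p},|\xi|)\le K\omega$ and $|\gamma|\le\max(k_{\rm s},|\xi|)\le K\omega$; (d) for the entries carrying $2\mu|\xi|^2-\omega^2+2\mu\beta\gamma$, rewrite it as $2\mu(|\xi|^2+\beta\gamma)-\omega^2$ and note $|\,|\xi|^2+\beta\gamma\,|\le 2k_{\rm s}^2$ on $|\xi|<K\omega$ (same case split, using the arithmetic--geometric mean inequality). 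Substituting (a)--(d) into each entry, dividing by the denominator and tracking the numerical constants gives $\|\mathcal{M}(\xi)\|\le C_K\omega$ with $C_K$ as stated; this step is bookkeeping only.

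For item~(2), $\mathcal{T}$ is the Fourier multiplier with symbol ${\rm i}\mathcal{M}(\xi)$, so by Plancherel it suffices that $\|\mathcal{M}(\xi)\|\le C(\omega)(1+|\xi|^2)^{1/2}$ for all $\xi\in\mathbb{R}^2$, since then $\|\mathcal{T}u\|_{H^{-1/2}(\Gamma_h)^3}^2=\int_{\mathbb{R}^2}(1+|\xi|^2)^{-1/2}|\mathcal{M}(\xi)\hat u(\xi)|^2\,{\rm d}\xi\le 9\,C(\omega)^2\,\|u\|_{H^{1/2}(\Gamma_h)^3}^2$. On $|\xi|\le K\omega$ this is item~(3). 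On $|\xi|>K\omega$ we are again in the evanescent regime and $\gamma_I\ge c_0|\xi|$ with $c_0=(1-1/(\mu K^2))^{1/2}>0$, so $|\gamma-\beta|=(k_{\rm s}^2-k_{\rm p}^2)/(\beta_I+\gamma_I)\le(k_{\rm s}^2-k_{\rm p}^2)/(c_0|\xi|)$; hence each numerator entry is $O(\omega^2|\xi|)$ while the denominator is $\ge k_{\rm p}^2$, giving the linear bound. This completes the plan; only the determinant inequality of item~(1) is genuinely delicate.
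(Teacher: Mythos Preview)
Your plan is correct and tracks the paper's proof closely; the main structural difference is your use of rotational covariance in item~(1). The paper instead applies Sylvester's criterion directly to the full $3\times 3$ Hermitian matrix $\mathcal{M}'(\xi)=\rho\cdot{\rm Re}(-{\rm i}\mathcal{M})$, checking the three leading principal minors. After some algebra the paper factors $\det\mathcal{M}'=\mu^3 g(\xi)\,d(\xi)/(\beta_I+\gamma_I)^2$ with $g(\xi)>0$ exactly your scalar part, so both routes reduce to the same scalar inequality --- your $\omega^4\beta_I\gamma_I>b^2|\xi|^2$ is (up to a positive factor) the paper's $d(\xi)>0$. Your rotational reduction is cleaner and spares the intermediate $2\times 2$ minor computation, but the ``main obstacle'' you flag is identical. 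For that step the paper substitutes $|\xi|^2=K'k_{\rm s}^2$ and $\alpha=k_{\rm p}^2/k_{\rm s}^2=\mu/(\lambda+2\mu)$, manipulates $d(\xi)$ into a form where the crude sufficient condition $K'\sqrt{(K'-1)/(K'-\alpha)}>\tfrac{1}{2(1-\alpha)\alpha}$ appears, and then takes $K'>1/(\alpha(1-\alpha))$, which yields precisely the stated $K$; you will need an equivalent manipulation to close your argument.

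For item~(3) your entrywise bounds (a)--(d) are exactly the paper's ingredients (the paper also splits into the three $|\xi|$--ranges and uses the same $|\rho|\ge k_{\rm p}^2$ and $|\gamma-\beta|\le(k_{\rm s}^2-k_{\rm p}^2)^{1/2}$), so the bookkeeping will match. For item~(2) the paper simply cites \cite{hlz}; your self-contained Fourier-multiplier argument is a fine replacement.
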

\begin{remark}
In comparison with properties of the matrix $M$ in two dimensions, we provide explicit constants $K$ and $C_K$ in terms of the Lame coefficients.
\end{remark}

\begin{proof}

Item (2) has been proved in \cite[Lemma 3.2]{hlz}.
Thus we only need to prove items (1) and (3).

(1) Since $|\xi|>K\omega>k_s$, we have $\beta={\rm i}|\beta|$ and $\gamma={\rm i}|\gamma|$, which implies \begin{align} \label{Le3.1.1}
    i\mathcal{M}(\xi)=\frac{-1}{|\xi|^2-|\beta||\gamma|} \left[\begin{array}{ccc}
        a_1(\xi) & b(\xi) & -ic(\xi)\xi_1 \\ b(\xi)
         & a_2(\xi) & -ic(\xi)\xi_2 \\
        ic(\xi)\xi_1 & ic(\xi)\xi_2 & a_3(\xi)
    \end{array}\right] := \frac{-1}{|\xi|^2-|\beta||\gamma|} \mathcal{M}'(\xi)
\end{align} with \begin{align*}
    a_1(\xi)&=\mu [\xi_2^2(|\gamma|-|\beta|)+k_s^2|\beta|],\quad a_2(\xi)=\mu [\xi_1^2(|\gamma|-|\beta|)+k_s^2|\beta|], \quad a_3(\xi)=\omega^2|\gamma|, \\
     b(\xi) &=-\mu \xi_1 \xi_2 (|\gamma|-|\beta|),\quad c(\xi)=2\mu|\xi|^2-\omega^2+2\mu|\beta||\gamma|.
\end{align*} 
It is obvious that $a_i(\xi)$, $b(\xi)$, $c(\xi)$ $\in \mathbb{R}$. Then from \eqref{Le3.1.1} we obtain \begin{align*}
    \Re (-{\rm i}\mathcal{M}(\xi))= \frac{1}{|\rho|}\mathcal{M}'(\xi)
\end{align*} with $\rho(\xi)=|\xi|^2+\beta\gamma$. Hence it remains to prove $\mathcal{M}'(\xi)$ is positive-definite when $|\xi|>K \omega$. To this end, we should verify \[
\text{\rm i})\, a_1(\xi)>0,\quad \text{\rm ii})\, \left|\begin{array}{cc}
    a_1(\xi) & b(\xi) \\
   b(\xi)  & a_2(\xi)
\end{array}\right| >0 ,\quad \text{\rm iii})\, \det{\mathcal{M}'(\xi)}>0.
\]

i) By direct calculation, it is obvious that \begin{align}
a_1(\xi)&=\mu[(|\gamma|-|\beta|)\xi_2^2+k_s^2|\beta|] \notag\\ &=\mu \frac{\xi_2^2(|\gamma|^2-|\beta|^2)+k_s^2(|\beta|^2+|\beta||\gamma|)}{|\gamma|+|\beta|} \notag \\ &=\mu \frac{\xi_1k_s^2+\xi_2k_p^2+k_s^2|\beta||\gamma|-k_p^2k_s^2}{|\gamma|+|\beta|} \notag \\ &\ge \mu \frac{(|\xi|^2-k_s^2)k_p^2+k_s^2|\beta||\gamma|}{|\gamma|+|\beta|} > 0. \label{Le3.1.2}
\end{align} Here the condition $|\xi|>K\omega >k_s$ is used in the last step.

ii) Denote $g(\xi)=(|\gamma|-|\beta|)|\xi|^2+k_s^2|\beta|$. Similar as \eqref{Le3.1.2} we have $g(\xi)>0$. Then one arrives at \begin{align*}
    \left|\begin{array}{cc}
        a_1(\xi) & b(\xi) \\
        b(\xi) & a_2(\xi)
    \end{array}\right|&=a_1a_2-b^2 \\ &=\mu^2[(|\gamma|-|\beta|)|\xi_1|^2+k_s^2|\beta|][(|\gamma|-|\beta|)|\xi_2|^2+k_s^2|\beta|]-\mu^2\xi_1^2\xi_2^2(|\gamma|-|\beta|)^2\\ &=\mu^2k_s^2|\beta|g(\xi)>0.
\end{align*}

iii) Denote $h(\xi)=2\xi_1\xi_2 b(\xi)-a_1(\xi)\xi_2^2-a_2(\xi)\xi_1^2$, then it can be verified that \begin{align}
    \det(\mathcal{M}'(\xi)) &=a_3(\xi)\left|\begin{array}{cc}
        a_1(\xi) & b(\xi) \\
        b(\xi) & a_2(\xi)
    \end{array}\right|+(-a_1(\xi)c(\xi)^2\xi_2^2+2b(\xi)c(\xi)^2\xi_1\xi_2-a_2(\xi)c(\xi)^2\xi_1^2) \notag \\ &= \mu^2k_s^2|\beta||\gamma|g(\xi)\omega^2+c(\xi)^2h(\xi). \label{Le3.1.3}
\end{align}
Direct calculation implies \begin{align}
    h(\xi) &=-2\mu \xi_1^2\xi_2^2(|\gamma|-|\beta|)-\mu\xi_1^2[\xi_1^2(|\gamma|-|\beta|)+k_s^2|\beta|] \notag  \\ &\quad-\mu\xi_2^2[\xi_2^2(|\gamma|-|\beta|)+k_s^2|\beta|] \notag \\ &=-\mu(|\gamma|-|\beta|)|\xi|^4-\mu k_s^2 |\beta| |\xi|^2= -\mu |\xi|^2 g(\xi).\label{Le3.1.4}
\end{align} Combining \eqref{Le3.1.3}-\eqref{Le3.1.4} gives \begin{align*}
    \det(\mathcal{M}'(\xi)) &=\mu^3 g(\xi) \{k_s^4|\beta||\gamma|-|\xi|^2[2|\gamma|(|\gamma|-|\beta|)+k_s^2]\} \\ &= \mu^3 g(\xi) \left\{k_s^4|\beta||\gamma|-\left[\frac{2|\gamma|(k_p^2-k_s^2)}{|\beta|+|\gamma|}+k_s^2\right]^2\right\} \\ &= \mu^3 g(\xi) \frac{d(\xi)}{(|\gamma|+|\beta|)^2}
\end{align*} with \[
d(\xi)= k_s^4(|\gamma||\beta|-|\xi|^2)(|\gamma|+|\beta|)^2+4|\gamma|(k_s^2-k_p^2)(|\gamma|k_p^2+|\beta|k_s^2)|\xi|^2.
\] Hence we only need to verify $d(\xi)>0$ for $|\xi|>K \omega$. Taking $|\xi|^2=K'k_s^2$ implies \begin{align*}
    d(\xi)&=k_s^8[(\sqrt{(K'-\alpha)(K'-1)}-K')(\sqrt{K'-1}+\sqrt{K'-\alpha})^2 \\ &+4(1-\alpha)K'\sqrt{K'-1}(\sqrt{K'-\alpha}+\alpha\sqrt{K'-1})] \\ &> k_s^8[-(\sqrt{K'-\alpha}+\sqrt{K'-1})^2+4(1-\alpha)K'\sqrt{K'-1}\alpha(\sqrt{K'-\alpha}+\sqrt{K'-1})]
\end{align*} with $\alpha:= {k_p^2}/{k_s^2}={\mu}/{(\lambda+2\mu)}<1$. In order to show $d(\xi)>0$, we will verify \begin{align*}
    2(1-\alpha)\alpha K' \sqrt{K'-1}>\sqrt{K'-\alpha},
\end{align*} i.e. \begin{align}\label{Le3.1.5}
    K'\sqrt{\frac{K'-1}{K'-\alpha}}>\frac{1}{2(1-\alpha)\alpha}.
\end{align} To guarantee \eqref{Le3.1.5}, let \[
\sqrt{\frac{K'-1}{K'-\alpha}} >\frac{1}{2},\quad C >\frac{1}{\alpha(1-\alpha)},
\] i.e. \[
K'>\max\left\{\frac{4-\alpha}{3}, \frac{1}{\alpha(1-\alpha)}\right\}=\frac{1}{\alpha(1-\alpha)}=\frac{(\lambda+2\mu)^2}{\mu(\lambda+\mu)}.
\] Hence, supposing that \[
|\xi|>\sqrt{\frac{K'}{\mu}}\omega=\frac{\lambda+2\mu}{\mu\sqrt{\lambda+\mu}}\omega
\] guarantees $d(\xi)>0$, which implies $\det{\mathcal{M}'(\xi)}>0$.

    (3) For $\rho(\xi)=|\xi|^2+\beta\gamma$,
    direct calculation gives \begin{align} \label{Le3.1.6}
      \left\{\begin{array}{lll}
       k_p^2 &\le |\rho| \le k_pk_s, \quad 0\le |\xi| \le k_p; \\  k_p^2 &\le |\rho| \le k_s^2, \quad k_p\le |\xi| \le k_s; \\  c_K\omega^2  &\le |\rho| \le k^2_s, \quad k_s \le |\xi| \le K\omega,\end{array}\right.
    \end{align} with \[
    c_K =K^2-\sqrt{(K^2-1/\mu)(K^2-1/(\lambda+2\mu))} > {1/(\lambda+2\mu)}.
    \]
    Here to derive the inequality for $k_s \le |\xi| \le K \omega$ we have used the fact that the function
    \[
    \rho(\xi) = |\xi|^2 - \sqrt{k_p^2 - |\xi|^2}  \sqrt{k_s^2 - |\xi|^2}
    \]
    is decreasing with respect to $|\xi|$ for $|\xi|\geq k_s$. We also consider $\gamma-\beta$ which is
    \begin{align*}
    \gamma-\beta=\sqrt{k_s^2-|\xi|^2}-\sqrt{k^2_p-|\xi|^2}=\left\{\begin{array}{ccc}
       |\gamma|-|\beta|,  & 0< |\xi| \le k_p, \\
       |\gamma|-i|\beta|,  & k_p< |\xi| \le k_s, \\
     i( |\gamma|-|\beta|),  & |\xi|>k_s.
    \end{array}\right.
    \end{align*} Then we immediately obtain
    \begin{align} \label{Le3.1.7}
    	\left\{\begin{array}{ll}
    		|\gamma-\beta| \le \sqrt{k^2_s-k^2_p} , & 0<|\xi| \le k_p \mbox{ or } |\xi|>k_s,\\
    	 |\gamma-\beta|=\sqrt{|\gamma|^2+|\beta|^2}=\sqrt{k^2_s-k^2_p}  , & k_p < |\xi| \le k_s.
    	\end{array}\right.
    \end{align}

    (3) To prove the third result, it suffices to verify the inequality
    $M_{ij} \le C \omega$, for $i,j = 1,2,3$ and $ |\xi| \le K \omega$. For $M_{33}$, by \eqref{Le3.1.6} we have \begin{align}\label{Le3.1.8}
        |M_{33}| = \left|\frac{\gamma\omega^2}{\rho}\right| \leq \left\{\begin{array}{ccc}
            \omega^2{k_s}/{k_p^2}={\omega(\lambda+2\mu)}/{\sqrt{\mu}}, &  0\le |\xi| \le k_p,\\
            \omega^2{\sqrt{k_s^2-k_p^2}}/{k_p^2}=\omega\sqrt{{(\lambda+\mu)(\lambda+2\mu)}/{\mu}},  & k_p\le |\xi| \le k_s, \\
            \omega \sqrt{K^2\omega^2-k_s^2}/c_K \omega^2= \omega  \sqrt{K^2-1/\mu}/c_K, & k_s\le |\xi| \le K \omega.
        \end{array}\right.
    \end{align}
Similarly,  $M_{23}$ and $M_{32}$ can be estimated using \eqref{Le3.1.6} by 
 \begin{align}
        |M_{23}| &=|M_{32}|  \notag \\ &= \left| \frac{2\mu \rho \xi_2-\omega^2\xi_2}{\rho}\right| \le \left\{\begin{array}{ccc}
            2\mu k_p + \omega^2/k_P=\omega(2\mu/\sqrt{\lambda+2\mu}+\sqrt{\lambda+2\mu}), &  0\le |\xi| \le k_p,\\
            2\mu k_s+\omega k_s/k_p^2=\omega(2\sqrt{\mu}+(\lambda+2\mu)/\sqrt{\mu}),  & k_p\le |\xi| \le k_s, \\
            2\mu K \omega + K \omega^3/c_K \omega^2=\omega(2\mu K+K/c_K), & k_s\le |\xi| \le K \omega.
        \end{array}\right. \label{Le3.1.9}
    \end{align} It is obvious that $|M_{13}|= |M_{31}| $ can also be estimated by the right-hand side of \eqref{Le3.1.9}.
  It remains to estimate $M_{11}$, $M_{22}$, $M_{12}$ and $M_{21}$. For convenience, denote \[\sqrt{k_s^2-k_p^2}=\omega \sqrt{\frac{\lambda+\mu}{\mu(\lambda+2\mu)}}:=  C_{\lambda,\mu} \omega. \]
   Combining \eqref{Le3.1.6}-\eqref{Le3.1.7} gives \begin{align}
       |M_{11}| &\le \mu \left|\frac{(\gamma-\beta)\xi_2^2+k_s^2\beta}{\rho}\right| \notag \\ &\le \left\{\begin{array}{ccc}
           \mu(\omega C_{\lambda,\mu} + k_s^2/k_p) =\omega(\mu C_{\lambda,\mu}+\sqrt{\lambda+2\mu}) , &  0\le |\xi| \le k_p,\\
            \mu (C_{\lambda,\mu}\omega k^2_s/k^2_p+\omega C_{\lambda,\mu} k^2_s/k_p^2) = 2\omega C_{\lambda,\mu}(\lambda+2\mu)/\mu,  & k_p\le |\xi| \le k_s, \\
             \omega(\mu C_{\lambda, \mu} K^2/c_K +\sqrt{K^2-1/(\lambda+2\mu)}/ c_K), & k_s\le |\xi| \le K \omega.
        \end{array}\right. \label{Le 3.1.10}
   \end{align}  Obviously, $|M_{22}|$ can also be estimated by the right-hand side of \eqref{Le 3.1.10}.
For $|M_{12}|$ and $|M_{21}|$, we combine \eqref{Le3.1.6}-\eqref{Le3.1.7} to obtain \begin{align}
    |M_{12}| &=|M_{21}| \notag \\ &\le \mu \left|\frac{\xi_1\xi_2(\gamma-\beta)}{\rho}\right| \le \left\{ \begin{array}{ccc}
        \omega \mu C_{\lambda,\mu}, & 0\le |\xi| \le k_p, \\
        \mu k_s^2 \omega C_{\lambda,\mu}/k_p^2 =\omega (\lambda+2\mu)C_{\lambda,\mu}, & k_p\le |\xi| \le k_s,\\ \mu K^2 C_{\lambda,\mu}\omega^3/c_K \omega^2 = \omega(\mu K^2 C_{\lambda,\mu})/c_K, & k_s\le |\xi| \le K \omega.
    \end{array}\right. \label{Le3.1.11}
\end{align} Combining the above results \eqref{Le3.1.8}-\eqref{Le3.1.11}, we have \begin{align*}
    \|M\| \le \left\{\begin{array}{ccc}
         C_{K,1}\omega, & 0\le |\xi| \le k_p, \\
        C_{K,2}\omega, & k_p\le |\xi| \le k_s, \\
        C_{K,3}\omega, & k_s\le |\xi| \le K \omega
    \end{array}\right.
\end{align*} with \begin{align*}
    C_{K,1} &=\max \left\{\frac{\lambda+2\mu}{\sqrt{\mu}},\, \frac{2\mu}{\sqrt{\lambda+2\mu}}+\sqrt{\lambda+2\mu},\, \mu C_{\lambda,\mu}+\sqrt{\lambda+2\mu},\, \mu C_{\lambda,\mu}\right\}, \\
     C_{K,2} &=\max \left\{(\lambda+2\mu)C_{\lambda,\mu},\, 2\sqrt{\mu}+\frac{\lambda+2\mu}{\sqrt{\mu}},\, \frac{2 C_{\lambda,\mu}(\lambda+2\mu)}{\mu}\right\},\\
     C_{K,3} &=\max \left\{\frac{\sqrt{K^2-\frac{1}{\mu}}}{c_K},\, 2\mu K + \frac{K}{c_K}, \,  \frac{\mu K^2 C_{\lambda,\mu}}{c_K},\, \frac{\mu K^2 C_{\lambda,\mu}}{ c_K} + \frac{\sqrt{K^2-\frac{1}{\lambda+2\mu}}}{c_K} \right\}.
\end{align*}
   It can be verified that \[
   C_{K,1} \le 2 \frac{\lambda+2\mu}{\sqrt{\mu}}, \quad C_{K,2} \le \frac{\lambda+2\mu}{\sqrt{\mu}}+2\frac{\lambda+2\mu}{\mu}C_{\lambda,\mu}+2\sqrt{\mu}
   \] and \[
   C_{K,3} \le \frac{K}{c_K}+\frac{\mu K^2 C_{\lambda,\mu}}{c_K}+2\mu K.
   \] Recalling that $c_K>1/(\lambda+2\mu)$, we have \begin{align*}
   \max\{C_{K,1},\, C_{K,2},\, C_{K,3}\} &\le 2(\lambda+4\mu)K+(\mu (\lambda+2\mu) K^2 +2 (\lambda+2\mu)/\mu) C_{\lambda,\mu} \\ &=2(\lambda+4\mu)K+(\mu (\lambda+2\mu) K^2 +2 (\lambda+2\mu)/\mu)\sqrt{\frac{\lambda+\mu}{\mu(\lambda+2\mu)}}.
   \end{align*}
   The proof is completed.
\end{proof}

\tcr{Recall that there exists a constant $C_0=C_0(h, L,m,M)>0$ independent of $\omega$ such that
\be\label{C}
\|\nabla u\|^2_{L^2(S_h)^3}\geq 1/C_0\, ||u||_{V_h}^2,\qquad
||u||^2_{H^{1/2}(\Gamma_h)}\leq C_0\,||u||_{V_h}^2,
\en
for all $u\in V_h$. The well-posedness result for small frequencies is stated below.
}
\begin{theo}\label{small}Let $K, C_K>0$ be given as in Lemma \ref{dtn}. Then there exists a
 small frequency $\omega_0>0$  such that
the variational problem admits a unique solution in $V_h$ for all $\omega\in (0,\omega_0]$. 
\end{theo}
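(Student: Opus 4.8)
The goal is to show that for small enough $\omega$ the sesquilinear form $B(\cdot,\cdot)$ defined in \eqref{3.17} is coercive on $V_h$, so that the Lax--Milgram theorem applies directly. The plan is to split $B(u,u)$ into its volume part and its boundary part, and to show that for small $\omega$ the real part of the volume part dominates everything.

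First I would treat the volume term. Writing $\mathcal{E}(u,\bar u)$ in terms of $|\nabla u|^2$ and $|\nabla\cdot u|^2$, $|\nabla\times u|^2$, one has (as in \cite{hlz}) a Korn-type lower bound $\operatorname{Re}\int_{S_h}\mathcal{E}(u,\bar u)\,dx\ge c\,\|\nabla u\|_{L^2(S_h)^3}^2$ with $c>0$ depending only on $\lambda,\mu$ (using $\mu>0$, $\lambda+2\mu/3>0$); combining with the Poincar\'e-type inequality \eqref{C}, the term $\operatorname{Re}\int_{S_h}(\mathcal{E}(u,\bar u)-\omega^2|u|^2)\,dx$ is bounded below by $(c/C_0-\omega^2 C_0)\|u\|_{V_h}^2$ up to constants, which is a positive multiple of $\|u\|_{V_h}^2$ once $\omega^2$ is small relative to $1/C_0^2$.

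Next I would control the boundary term $\int_{\Gamma_h}\bar u\cdot\mathcal{T}u\,ds$. The natural split is $\mathcal{T}=\mathcal{T}_1+\mathcal{T}_2$, where $\mathcal{T}_1$ corresponds to the Fourier modes $|\xi|>K\omega$ and $\mathcal{T}_2$ to $|\xi|<K\omega$. By item (1) of Lemma \ref{dtn}, $\operatorname{Re}(-\mathrm{i}\mathcal{M}(\xi))>0$ on $|\xi|>K\omega$, so by the Plancherel identity $\operatorname{Re}\big(-\int_{\Gamma_h}\bar u\cdot\mathcal{T}_1 u\,ds\big)=\operatorname{Re}\tfrac{1}{2\pi}\int_{|\xi|>K\omega}\overline{\hat u(\xi)}^{\top}(-\mathrm{i}\mathcal{M}(\xi))\hat u(\xi)\,d\xi\ge 0$, so $\mathcal{T}_1$ contributes with the favorable sign and may be dropped from the lower bound. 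For the remaining low-frequency part, item (3) gives $\|\mathcal{M}(\xi)\|\le C_K\omega$ for $|\xi|<K\omega$, while the truncation to $|\xi|<K\omega$ means $\langle\xi\rangle^{1/2}$ is comparable to a constant there; hence, using Plancherel again together with the trace bound $\|u\|^2_{H^{1/2}(\Gamma_h)}\le C_0\|u\|_{V_h}^2$ from \eqref{C}, one gets $\big|\int_{\Gamma_h}\bar u\cdot\mathcal{T}_2 u\,ds\big|\le C\,C_K\,\omega\,(1+(K\omega)^2)\,\|u\|_{V_h}^2$ (the $\langle\xi\rangle$-weights on the low-frequency ball being absorbed into the $(1+(K\omega)^2)$ factor), which tends to $0$ as $\omega\to0$.

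Putting these together, $\operatorname{Re}B(u,u)\ge\big(c/C_0-C\omega^2 C_0-C\,C_K\,\omega(1+K^2\omega^2)\big)\|u\|_{V_h}^2$, and the right-hand coefficient is positive for all $\omega\in(0,\omega_0]$ once $\omega_0$ is chosen small enough (depending on $\lambda,\mu,C_0,K,C_K$, hence on $h,L,m,M$ but not otherwise on $\omega$). Boundedness of $B$ follows from items (2) of Lemma \ref{dtn} and the continuity of the volume terms. Lax--Milgram then yields a unique $u\in V_h$ solving \eqref{3.16}. The main obstacle is the low-frequency boundary estimate: one must be careful that truncating the DtN symbol to $|\xi|<K\omega$ is exactly what converts the a priori unbounded $H^{1/2}\to H^{-1/2}$ operator into something of size $O(\omega)$ as a map on $L^2(\Gamma_h)^3$, and that the $H^{1/2}$-weights restricted to that shrinking ball do not spoil the smallness — this is where the explicit bound $\|\mathcal{M}(\xi)\|\le C_K\omega$ from item (3), rather than just boundedness, is essential.
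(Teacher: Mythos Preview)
Your proposal is correct and follows essentially the same route as the paper: establish coercivity of $\Re B(u,u)$ for small $\omega$ by (i) a Korn-type lower bound $\int_{S_h}\mathcal{E}(u,\bar u)\,dx\ge \mu\|\nabla u\|^2$ combined with Poincar\'e, (ii) dropping the high-frequency boundary contribution via item~(1) of Lemma~\ref{dtn}, and (iii) bounding the low-frequency contribution by $C_K\omega$ times a trace norm via item~(3); then apply Lax--Milgram. The only cosmetic difference is that your low-frequency boundary estimate carries an extra factor $(1+(K\omega)^2)$, which is unnecessary---since $\int_{|\xi|<K\omega}|\hat u|^2\,d\xi\le \|u\|_{L^2(\Gamma_h)}^2\le \|u\|_{H^{1/2}(\Gamma_h)}^2\le C_0\|u\|_{V_h}^2$ directly---but harmless, as it still vanishes as $\omega\to 0$.
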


\begin{proof} \tcr{It is clear that $\|\nabla \times u\|^2_{L^2(S_h)^3}\leq \|\nabla  u\|^2_{L^2(S_h)^3}$.}
Now it follows from the definition of $B$ and Lemma \ref{dtn} that
\begin{align}\label{4.6}
\Re B(u,u) &=  2\mu\|\nabla u\|^2_{L^2(S_h)^3} + \lambda\|\nabla \cdot u\|^2_{L^2(S_h)^3}- \mu\|\nabla\times u\|^2_{L^2(S_h)^3} \notag\\
&\quad\quad  -\omega^2\|u\|^2_{L^2(S_h)^3}- \Re\int_{\Gamma_h}  \bar{u}\cdot \mathcal{T} {u} {\rm d}s\notag\\
&= 2\mu\|\nabla u\|^2_{L^2(S_h)^3} + \lambda\|\nabla \cdot u\|^2_{L^2(S_h)^3} - \mu\|\nabla\times u\|^2_{L^2(S_h)^3}  -\omega^2\|u\|^2_{L^2(S_h)^3} \\
&\quad \quad+ \int_{|\xi|\leq  K\omega}{\rm Re}(-{\rm i}\mathcal{M}(\xi))\hat{u}\cdot\bar{\hat{u}}{\rm d}\xi  + \int_{|\xi|>  K\omega}{\rm Re}(-{\rm i}\mathcal{M}(\xi))\hat{u}\cdot\bar{\hat{u}}{\rm d}\xi\notag\\
&\geq \mu\|\nabla u\|^2_{L^2(S_h)^3} - \omega^2\|u\|^2_{L^2(S_h)^3} + \int_{|\xi|\leq  K\omega}{\rm Re}(-{\rm i}\mathcal{M}(\xi))\hat{u}\cdot\bar{\hat{u}}{\rm d}\xi\notag\\
&\geq \mu\|\nabla u\|^2_{L^2(S_h)^3} - \omega^2\|u\|^2_{L^2(S_h)^3} - \tcr{C_K\,C_0}\omega\|u\|^2_{V_h},
\end{align}
where the constant \tcr{$C_K>0$ is given by Lemma \ref{dtn} (3)} and the constant $C_0$ is specified in \eqref{C}.
By Lemma 3.4 in \cite{CM} we have the following Poincare's inequality
\begin{align}\label{4.7}
\|u\|^2_{L^2(S_h)^3}\leq \textcolor{black}{\tcr{(h-m)}}\|\partial_3u\|^2_{L^2(S_h)^3} \leq \textcolor{black}{\tcr{(h-m)}}\|\nabla u\|^2_{L^2(S_h)^3}, \quad u\in V_h.
\end{align}
Using \eqref{4.6}-\eqref{4.7}, we obtain the estimate
\ben
\Re B(u,u)\geq \big(\mu/C_0-\omega C_0\, C_K-\omega^2 (h-m)\big)\,\|u\|^2_{V_h}\\
\geq \tcr{\big(\mu/C_0-\omega_0 C_0\, C_K-\omega_0^2 (h-m)\big)\,\|u\|^2_{V_h}}\enn
for all $u\in V_h$ and $\omega\in (0,\omega_0]$.
Choose $\omega_0$ sufficiently small such that
\ben
\mu/C_0-\omega_0 C_0\, C_K-\omega_0^2 (h-m)>0.
\enn
The proof is completed by applying the Lax-Milgram theorem.
\end{proof}

\section{An {\it a priori} bound for smooth rough surfaces}\label{bound}

In this section, we establish an {\it a priori} bound for a smooth rough surface at any frequency. The attractive feature is that all
constants in the {\it a priori} estimates are bounded
by explicit functions of $\omega$, $h, m, M$ and $L$.

\begin{lemm}\label{lemma6}
Let $u\in V_h$ be a variational solution to \eqref{3.16} with $g\in V_h$. We have
\begin{align*} \textcolor{black}{
\|\nabla\cdot u\|^2_{L^2(\Gamma)},\; \|\nabla\times u\|^2_{L^2(\Gamma)^3}} \leq {C_1}\;\|g\|_{L^2(S_h)^3}\|\partial_3u\|_{L^2(S_h)^3},
\end{align*} where $C_1=\tcr{4\mu^{-1}{(1+L^2)^{1/2}(\omega/\sqrt{\mu}\,(h-m)+1)}}$.

\end{lemm}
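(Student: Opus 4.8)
The plan is to derive the estimate from a Rellich-type identity obtained by testing the Navier equation against a suitably chosen vector field related to $u$. The natural candidate is to multiply equation \eqref{2.2} by $\overline{\partial_3 u}$ (or, more precisely, by $\overline{x_3\,\partial_3 u}$ or by $\overline{\partial_3 u}$ combined with a cutoff), integrate over $S_h$, and take real (or imaginary) parts. Because $u = 0$ on $\Gamma$ in the sense of $V_h$ and $\Gamma$ is Lipschitz with constant $L$, the tangential derivatives of $u$ vanish on $\Gamma$, so on $\Gamma$ the full gradient $\nabla u$ is determined by $\partial_\nu u$; concretely $\nabla u = (\partial_\nu u)\otimes\nu$ on $\Gamma$, whence $\nabla\cdot u = \nu\cdot\partial_\nu u$ and $\nabla\times u = \nu\times\partial_\nu u$ there, and the factor $(1+L^2)^{1/2}$ in $C_1$ will enter through $|\nu_3|^{-1}\le (1+L^2)^{1/2}$ when converting the normal derivative on $\Gamma$ to the $x_3$-derivative (or through the surface-measure/normal-component bookkeeping $\mathrm{d}s$ vs.\ $\mathrm{d}x'$ on the graph of $f$).

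First I would write out Betti's / the first Green identity for the Navier operator on $S_h$ with test field $\overline{\partial_3 u}$, producing a boundary integral over $\Gamma\cup\Gamma_h$ involving $Tu$ and $\partial_3 u$, plus a volume term $\int_{S_h} g\cdot\overline{\partial_3 u}$ and a term $\omega^2\int_{S_h} u\cdot\overline{\partial_3 u}$. On $\Gamma$, using $u|_\Gamma = 0$, the traction $Tu = 2\mu\partial_\nu u + \lambda(\nabla\cdot u)\nu + \mu\,\nu\times(\nabla\times u)$ simplifies, and pairing with $\partial_3 u$ isolates precisely $\|\nabla\cdot u\|_{L^2(\Gamma)}^2$ and $\|\nabla\times u\|_{L^2(\Gamma)^3}^2$ up to geometric constants. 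The contribution from $\Gamma_h$ is handled either by the sign structure of the DtN operator (Lemma \ref{dtn}(1), on $|\xi|>K\omega$) or absorbed; but to keep the estimate clean and the constant $C_1$ of the stated form, I expect the cleanest route is a localized Rellich identity where one does not touch $\Gamma_h$ at all — e.g.\ multiply by $\overline{(x_3 - h)\,\partial_3 u}$ or by $\overline{\chi\,\partial_3 u}$ with $\chi$ vanishing near $x_3 = h$ — so the only surviving boundary term is on $\Gamma$. The remaining volume terms are then bounded by Cauchy–Schwarz: $\omega^2\int_{S_h} u\cdot\overline{\partial_3 u} \le \omega^2\|u\|_{L^2(S_h)}\|\partial_3 u\|_{L^2(S_h)}$, and then Poincaré \eqref{4.7}, $\|u\|_{L^2(S_h)}\le \sqrt{h-m}\,\|\partial_3 u\|_{L^2(S_h)}$, converts this into $\omega^2(h-m)\|\partial_3 u\|_{L^2(S_h)}^2$; together with $\int_{S_h} g\cdot\overline{\partial_3 u}\le \|g\|_{L^2(S_h)}\|\partial_3 u\|_{L^2(S_h)}$ this reproduces the shape $(1+L^2)^{1/2}(\omega\mu^{-1/2}(h-m)+1)\,\|g\|_{L^2(S_h)}\|\partial_3 u\|_{L^2(S_h)}$ after dividing through by $\mu$ and tracking the numerical constant up to $4$.

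The main obstacle, I expect, is twofold. First, making the Rellich identity rigorous: $\partial_3 u$ is only in $L^2(S_h)^3$, not in $V_h$, so $\overline{\partial_3 u}$ is not a legitimate test function directly; one must either work with a smooth rough surface (as the section title signals — hence "smooth rough surfaces"), regularize/difference-quotient in $x_3$, or pass through the $H^2_{loc}$ regularity that interior elliptic estimates plus the smoothness of $\Gamma$ provide, and then justify the boundary integration by parts on the Lipschitz-in-the-limit/graph surface. Second, the bookkeeping of the $\Gamma$-boundary term: one must show that the cross-terms coming from $2\mu\partial_\nu u$ paired against $\partial_3 u$, after using $u|_\Gamma=0$ to rewrite everything in terms of $\partial_\nu u$, combine with the $\lambda$- and $\mu$-curl pieces to give exactly (a constant times) $\|\nabla\cdot u\|_{L^2(\Gamma)}^2 + \|\nabla\times u\|_{L^2(\Gamma)^3}^2$ with the right sign, rather than an indefinite quadratic form — this is where the identity $\nabla u|_\Gamma = (\partial_\nu u)\otimes\nu$ and the algebra $|\partial_\nu u|^2 = |\nu\cdot\partial_\nu u|^2 + |\nu\times\partial_\nu u|^2$ do the decisive work, and where the Lipschitz bound $|\nu'|\le L|\nu_3|$, i.e.\ $1\le |\nu_3|^{-1}\le (1+L^2)^{1/2}$, supplies the factor $(1+L^2)^{1/2}$.
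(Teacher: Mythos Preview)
Your plan correctly identifies the Rellich identity with multiplier $\partial_3\bar u$, the boundary algebra on $\Gamma$ (where $u|_\Gamma=0$ forces $\nabla u=(\partial_\nu u)\otimes\nu$, hence $|\partial_\nu u|^2=|\nabla\cdot u|^2+|\nabla\times u|^2$), the geometric factor $|\nu_3|^{-1}\le(1+L^2)^{1/2}$, and the use of Poincar\'e. The regularity concern you raise is also legitimate and is indeed dealt with by first assuming smoothness.

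The gap is in your treatment of the $\Gamma_h$ boundary contribution. With the unweighted multiplier $\partial_3\bar u$, the Rellich identity has \emph{no} interior $\omega^2$ volume term: $2\Re(\omega^2 u\cdot\partial_3\bar u)=\omega^2\partial_3|u|^2$ integrates to the boundary, and likewise $2\Re\,\mathcal{E}(u,\partial_3\bar u)=\partial_3\mathcal{E}(u,\bar u)$. So the $\omega$-dependence in $C_1$ cannot come from where you claim. What actually survives on $\Gamma_h$ is
\[
\int_{\Gamma_h}\bigl\{2\Re(Tu\cdot\partial_3\bar u)-\mathcal{E}(u,\bar u)+\omega^2|u|^2\bigr\}\,ds,
\]
which is nonnegative and therefore must be bounded \emph{from above}, not dropped. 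Neither of your proposals achieves this: the sign of $\Re(-{\rm i}\mathcal{M})$ from Lemma~\ref{dtn}(1) holds only for $|\xi|>K\omega$ and concerns a different quadratic form; and inserting a cutoff $\chi(x_3)$ vanishing near $\Gamma_h$ generates a new volume term of the type $\int_{S_h}\chi'\,\mathcal{E}(u,\bar u)\,dx$, of order $\|\nabla u\|_{L^2(S_h)}^2$, which is not controlled by $\|g\|_{L^2}\|\partial_3 u\|_{L^2}$ and wrecks the estimate.

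The missing device is to exploit the UPRC/Fourier representation on $\Gamma_h$: both the Rellich $\Gamma_h$-term above and $\Im\int_{\Gamma_h}Tu\cdot\bar u\,ds$ can be written as integrals of $\beta^2,\gamma^2$ (resp.\ $\beta,\gamma$) against $|A_{\rm p}|^2,|\boldsymbol A_{\rm s}|^2$ over $\{|\xi|<k_{\rm p}\}$ and $\{|\xi|<k_{\rm s}\}$. Since $\beta\le k_{\rm s}$ and $\gamma\le k_{\rm s}$ there, one gets the pointwise comparison yielding
\[
\int_{\Gamma_h}\bigl\{2\Re(Tu\cdot\partial_3\bar u)-\mathcal{E}(u,\bar u)+\omega^2|u|^2\bigr\}\,ds\le 2k_{\rm s}\,\Im\int_{\Gamma_h}Tu\cdot\bar u\,ds.
\]
Taking the imaginary part of the variational equation with $v=u$ then converts the right-hand side into $2k_{\rm s}\,\Im\int_{S_h}g\cdot\bar u\,dx$, and Poincar\'e gives exactly the factor $k_{\rm s}(h-m)=\omega\mu^{-1/2}(h-m)$ appearing in $C_1$.
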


\begin{proof}

By \tcr{\cite[Lemma 4.1]{hlz}(see also \cite[Lemma 5]{eh-mmmas12} for the periodic version)} we have the following Rellich identity
\begin{align}\label{rellich}
&2\Re \int_{S_h} (\mu \Delta u + (\lambda + \mu)\textcolor{black}{\nabla}\nabla \cdot u + \omega^2 u)\cdot \partial_3 \bar{u}{\rm d}x\notag\\
&= \Big(-\int_{\Gamma} + \int_{\Gamma_h}\Big) \Big\{2\Re(Tu \cdot \partial_3 \bar{u}) - \nu_3 \mathcal{E}(u,\bar{u}) + \omega^2|u|^2 \Big\}{\rm d}s,
\end{align}
and
\begin{align}\label{identity}
 Tu \cdot \partial_3 \bar{u} = \textcolor{black}{\nu_3}\mathcal{E}(u,\bar{u})= \mu|\partial_{\nu}u|^2\nu_3 + \nu_3(\lambda + \mu) |\nabla \cdot u|^2.
\end{align}
From \cite[Lemma 4.2 (ii)]{hlz} we also have the following two identities
\begin{align}\nonumber
& \int_{\Gamma_h} \Big\{2\Re(Tu \cdot \partial_3 \bar{u}) -  \mathcal{E}(u,\bar{u}) + \omega^2|u|^2 \Big\}{\rm d}s\\ \label{eq:4}
=& \;  2\omega^2  \int_{|\xi|< k_{\rm p}} \beta^2(\xi) |A_{\rm p}(\xi)|^2\,{\rm d}\xi+ 2\mu
\int_{|\xi|< k_{\rm s}} \gamma^2(\xi) |\textbf{A}_{\rm s}(\xi)|^2\,{\rm d}\xi\\
=& \tcr{\;  2\omega^2 \Big\{ \int_{|\xi|< k_{\rm p}} \beta^2(\xi) |A_{\rm p}(\xi)|^2\,{\rm d}\xi+
\int_{|\xi|< k_{\rm s}} \gamma^2(\xi) |\tilde{\textbf{A}}_{\rm s}(\xi)|^2\,{\rm d}\xi
\Big\}},\nonumber\\
 \textcolor{black}{\Im}\int_{\Gamma_h} Tu \cdot \bar{u}{\rm d}s &=
\int_{|\xi|< k_{\rm p}} \omega^2\beta( \xi)|A_{\rm p}(\xi)|^2 {\rm d} \xi + \int_{|\xi|< k_{\rm s}}\mu\gamma(\xi)|\boldsymbol A_{\rm s}( \xi)|^2{\rm d} \xi \nonumber\\
&\tcr{=\omega^2\Big\{\int_{|\xi|< k_{\rm p}} \beta( \xi)|A_{\rm p}(\xi)|^2 {\rm d} \xi + \int_{|\xi|< k_{\rm s}}\gamma(\xi)|\tilde{\boldsymbol{A}}_{\rm s}( \xi)|^2{\rm d} \xi\Big\}}\label{eq:a}.
\end{align}
\tcr{Here we have used the relation $|\boldsymbol A_{\rm s}( \xi)|^2=k_{\rm s}^2
|\tilde{\boldsymbol {A}}_{\rm s}( \xi)|^2$. Note that the identity \eqref{eq:4} corrects a mistake made in \cite[Formula (4.1)]{hlz}.}
Hence, combing \eqref{rellich} and \eqref{identity} gives
\begin{align}\label{5.11}
&-\int_{\Gamma} \mu|\partial_{\nu}u|^2\nu_3 + \nu_3(\lambda + \mu) |\nabla \cdot u|^2~{\rm d}s\notag\\
\quad = & \int_{\Gamma_h} 2\Re(Tu \cdot \partial_3 \bar{u}) - \nu_3 \mathcal{E}(u,\bar{u}) + \omega^2|u|^2~{\rm d}s - 2\Re\int_{S_h} g\partial_3\bar{u}~{\rm d}x.
\end{align}
Using \eqref{eq:4} and \eqref{eq:a} and taking the imaginary part of \eqref{3.16}, we get
\begin{align}\label{5.14}
\int_{\Gamma_h}\Big\{ 2\Re(Tu \cdot \partial_3 \bar{u}) -  \mathcal{E}(u,\bar{u}) + \omega^2|u|^2 \Big\}{\rm d}s&\leq 2\textcolor{black}{ k_{\rm s}}\Im \int_{\Gamma_h}Tu \cdot \bar{u}{\rm d}s\notag\\
&\leq  2\textcolor{black}{ k_{\rm s}}\Im \int_{S_h} g\cdot \bar{u}{\rm d}s.
\end{align}
Combing \eqref{5.11} and \eqref{5.14} then gives the estimates
\begin{align}\label{5.15}
&-\int_{\Gamma} \mu|\partial_{\nu}u|^2\nu_3 + \nu_3(\lambda + \mu) |\nabla \cdot u|^2~{\rm d}s\notag\\
&\quad \leq 2\textcolor{black}{ k_{\rm s}} \Im\int_{S_h} g\cdot \bar{u}{\rm d}x -  2\Re\int_{S_h} g\cdot \partial_3\bar{u}{\rm d}x\notag\\
&\quad \leq 2 \textcolor{black}{\|g\|^2_{L^2(S_h)^3}\Big(\frac{\omega}{\sqrt{\mu}}\|u\|^2_{L^2(S_h)^3} + \|\partial_3u\|^2_{L^2(S_h)^3}\Big)}\notag\\
&\quad \leq \tcr{2 (\omega/\sqrt{\mu}\, (h-m)+1)}\|g\|^2_{L^2(S_h)^3}\|\partial_3u\|^2_{L^2(S_h)^3},
\end{align}
where the last identity follows from \eqref{4.7}. Since
\begin{align}\label{nu3}
\nu_3(x)=-\frac{1}{\sqrt{1+|\nabla_{x^{\prime}}f|^2}}<-(1+L^2)^{-1/2}<0
\quad\mbox{on}\,\,\Gamma,
\end{align}
from \eqref{5.15} we obtain that
\begin{align}\label{5.17}
&\|\nabla\cdot u\|^2_{L^2(\Gamma)} + \|\partial_{\nu} u\|^2_{L^2(\Gamma)^3}\notag\\
&\leq \tcr{2\mu^{-1}{(1+L^2)^{1/2}(\omega/\sqrt{\mu}\,(h-m)+1)}}\|g\|^2_{L^2(S_h)^3}\|\partial_3u\|^2_{L^2(S_h)^3}.
\end{align}
 Finally, using $u=0$ on $\Gamma$ and the identities in \cite[(4.17)]{eh-mmmas12} we have that
\[
\nu_3 |\nabla \times u|^2 = \nu_3 (|\nabla u|^2 - |\nabla\cdot u|^2) = \nu_3 (|\partial_{\nu}u|^2 - |\nabla\cdot u|^2) \quad\mbox{on}\,\,\Gamma.
\]
Thus, $\|\nabla \times u\|_{L^2(\Gamma)^3}$ can also be bounded by the right-hand side of \eqref{5.17} \tcr{multiplied by two}.

\end{proof}

We next need to derive estimates for the $L^2$ norms of the scalar function $\nabla \cdot u$ and the vector function $\nabla\times u$
on the artificial boundary $\Gamma_H$ and the strip $S_H$ where $H=h+1$. The derivation is based on the {\it a priori} bound for the Helmholtz equation in
\cite{CM}. By \eqref{hd}, we define
\ben
\varphi:=-\frac{\rm i}{k_p^2} \nabla\cdot u,\quad
\psi:=\frac{\rm i}{k_s^2}\,\nabla\times u,\qquad\mbox{in}\quad x_3>h.
\enn
Since both $\varphi$ and $\psi$ satisfy the Helmholtz equation \eqref{hlm} and the UASR \eqref{re},  one has the following Dirichlet-to-Neumann map
on the artificial boundary $\Gamma_H$:
\begin{align}\label{5.22}
\widetilde{\mathcal{T}}w = \mathcal{F}^{-1}({\rm i}\eta\mathcal{F}w), \quad w\in H^{1/2}(\Gamma_H),
\end{align}
where $w=\varphi, \psi$, and $\eta = \beta, \gamma$, respectively. Moreover, $\widetilde{\mathcal{T}}$ is a bounded linear map of $H^{1/2}(\Gamma_H)$ to $H^{-1/2}(\Gamma_H)$
by  \cite[Lemma 2.4]{CM}. From Lemma \ref{lemma6} we can estimate the $L^2$ norm of the trace $w$ on $\Gamma$ as
\begin{align}\label{5.23}
\|w\|^2_{L^2(\Gamma)^3}\leq C_1(\omega, h, L)\|g\|_{L^2(S_H)^3}\|\partial_3u\|_{L^2(S_H)^3}.
\end{align}

The following lemma provides estimates for $w$ on $S_H$ and the trace of $w$ on $\Gamma_H$.

\begin{lemm}\label{lemma7}

Assume that $w$ satisfies the Helmholtz equation
\be\label{he}
\Delta w + k^2 w = g_0 \quad \text{in} \,\, S_H, \qquad
\widetilde{\mathcal{T}}w = \mathcal{F}^{-1}({\rm i}\sqrt{k^2-\xi^2}\mathcal{F}w)\quad \text{on}\,\,\Gamma_H
\en
where $g_0\in L^2(S_H)$.
\textcolor{black} {Then there holds the estimate
\begin{align}\label{5.24}
\|w\|_{L^2(\Gamma_H)^3}\leq \|w\|_{L^2(S_H)^3}\leq \widetilde{C}_2(L,k,h)\|w\|_{L^2(\Gamma)^3} + \widetilde{C}_3(k,h)\|g_0\|_{L^2(S_H)^3}
\end{align} with \[
\widetilde{C}_2(L,k,h)=C(1+L^2)^{1/4}\sqrt{H-m}(1+k(H-m))
    \]and\[
 \widetilde{C}_3(k,h)=C(H-m)(1+k(H-m))^2/k
    .\]}
\end{lemm}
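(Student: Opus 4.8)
**The plan is to reduce the vectorial estimate \eqref{5.24} to a scalar one and then quote the Helmholtz {\it a priori} bound from \cite{CM}.** Note first that $w$ here is allowed to be scalar ($w=\varphi$) or vector-valued ($w=\psi$), but the equation \eqref{he} and the boundary condition \eqref{5.22} act componentwise, so it suffices to establish the estimate for a scalar function $w$ satisfying $\Delta w + k^2 w = g_0$ in $S_H$ with the transparent boundary condition $\widetilde{\mathcal T}w = \mathcal F^{-1}({\rm i}\sqrt{k^2-\xi^2}\,\mathcal Fw)$ on $\Gamma_H$ and an unknown (but $L^2$-controlled) Dirichlet trace $w|_\Gamma$; summing the three components at the end only costs a harmless constant. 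The two output constants $\widetilde C_2,\widetilde C_3$ have exactly the shape of the constants appearing in the Helmholtz a priori bound of \cite{CM} (Theorem/Lemma there), with the Lipschitz factor $(1+L^2)^{1/4}$ entering through the boundary trace inequality \eqref{C} and the surface measure element $\sqrt{1+|\nabla_{x'}f|^2}$.

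**First I would set up the weak formulation on $S_H$.** Multiply the Helmholtz equation by $\bar w$, integrate over $S_H$, integrate by parts, and use the TBC on $\Gamma_H$ together with the Dirichlet data on $\Gamma$: this gives an identity of the form
\begin{align*}
\int_{S_H}\bigl(|\nabla w|^2 - k^2|w|^2\bigr)\,{\rm d}x - \int_{\Gamma_H}\overline{w}\,\widetilde{\mathcal T}w\,{\rm d}s = -\int_{S_H} g_0\bar w\,{\rm d}x + (\text{boundary terms on }\Gamma),
\end{align*}
where the $\Gamma$-contribution involves $\partial_\nu w$ and $w|_\Gamma$. Taking real and imaginary parts and using ${\rm Im}\,\widetilde{\mathcal T}$ being nonnegative on the propagating modes (exactly as in the proof of Lemma \ref{lemma6}), one extracts control of $\|\nabla w\|_{L^2(S_H)}$ and hence, via the Poincar\'e inequality \eqref{4.7} (with $h$ replaced by $H$, so the constant is $H-m$), of $\|w\|_{L^2(S_H)}$. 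The trace inequality $\|w\|_{L^2(\Gamma_H)}\le \|w\|_{L^2(S_H)}$ is immediate from the fact that $w$ vanishes below $\Gamma$ and the fundamental theorem of calculus in $x_3$, which also justifies the first inequality in \eqref{5.24}.

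**The main obstacle is tracking the explicit $k$- and $(H-m)$-dependence of the constants rather than just proving boundedness.** The naive energy estimate above loses a factor of $k$ because of the $k^2|w|^2$ term, and one must instead invoke the sharper Helmholtz a priori estimate of \cite{CM} (which is itself proved by a Rellich-identity/multiplier argument with the multiplier $x_3\,\partial_3 w$), applied on the slab $S_H$ with inhomogeneous Dirichlet data on $\Gamma$ and the TBC on $\Gamma_H$. Concretely, I would (i) split $w = w_0 + w_1$ where $w_1$ is a bounded extension of the Dirichlet data $w|_\Gamma$ into $S_H$ with $\|w_1\|_{H^1(S_H)}\lesssim \|w|_\Gamma\|_{L^2(\Gamma)}\cdot(\text{factor from }\eqref{C})$, supported near $\Gamma$; (ii) note $w_0$ solves the Helmholtz equation with homogeneous Dirichlet data on $\Gamma$, TBC on $\Gamma_H$, and right-hand side $g_0 - (\Delta + k^2)w_1 \in L^2(S_H)$; (iii) apply the explicit bound from \cite{CM} to $w_0$, which yields $\|w_0\|_{L^2(S_H)}\lesssim \widetilde C_3(k,h)\,\|g_0 - (\Delta+k^2)w_1\|_{L^2(S_H)}$; (iv) estimate $\|(\Delta+k^2)w_1\|_{L^2(S_H)}\lesssim (1+k^2)\|w_1\|_{H^2}$, absorb the resulting $k$-powers, and collect terms to get the $\widetilde C_2\|w\|_{L^2(\Gamma)} + \widetilde C_3\|g_0\|_{L^2(S_H)}$ form. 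The delicate accounting is showing the extension/lifting contributes precisely the factor $(1+L^2)^{1/4}\sqrt{H-m}(1+k(H-m))$ and no worse; this is where the Lipschitz constant $L$ and the slab width $H-m$ enter, and where one must be careful to use \eqref{C} with its $\omega$-independent (hence $k$-independent) constant $C_0$ absorbed into the universal $C$.
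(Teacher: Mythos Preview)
Your lifting/splitting approach in steps (i)--(iv) has a genuine regularity gap. From the $L^2(\Gamma)$ trace of $w$ you can construct an extension $w_1$ with at best $H^{1/2}$ or $H^1$ regularity in $S_H$, not $H^2$; hence $(\Delta+k^2)w_1$ is not in $L^2(S_H)$ and you cannot feed it into the \cite{CM} estimate as a right-hand side for $w_0$. Even if you chose a harmonic extension so that $\Delta w_1=0$, you would still need $w_0=w-w_1$ to satisfy the transparent boundary condition on $\Gamma_H$, which an arbitrary lifting has no reason to respect; so the splitting does not decouple the two boundaries in the way your argument requires, and the constant bookkeeping in (iv) cannot be carried out.

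The paper avoids this by a \emph{duality} argument rather than a lifting. One solves the auxiliary problem $(\Delta+k^2)v=\bar w$ in $S_H$ with $v=0$ on $\Gamma$ and $\partial_3 v=\widetilde{\mathcal T}v$ on $\Gamma_H$. This problem has homogeneous Dirichlet data on $\Gamma$ and so falls directly under \cite[Lemma 4.6]{CM}, giving $\|\nabla v\|_{L^2}+k\|v\|_{L^2}\le C(H-m)(1+k(H-m))^2\|w\|_{L^2(S_H)}$. A Rellich identity for $v$ then yields $\|\partial_\nu v\|_{L^2(\Gamma)}^2\le C(1+L^2)^{1/2}(H-m)(1+k(H-m))^2\|w\|_{L^2(S_H)}^2$. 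Finally, Green's formula together with the symmetry of $\widetilde{\mathcal T}$ (so the $\Gamma_H$ contributions cancel) gives
\[
\|w\|_{L^2(S_H)}^2=\int_{S_H} v\,g_0\,{\rm d}x+\int_\Gamma w\,\partial_\nu v\,{\rm d}s,
\]
and the second inequality in \eqref{5.24} follows by Cauchy--Schwarz and dividing through by $\|w\|_{L^2(S_H)}$. This is exactly where the factor $(1+L^2)^{1/4}$ and the constants $\widetilde C_2,\widetilde C_3$ arise, with no lifting needed.

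Your justification of the first inequality is also off: $w$ does \emph{not} vanish on $\Gamma$ (its trace there is precisely the quantity appearing on the right of \eqref{5.24}), so the fundamental theorem of calculus in $x_3$ does not give $\|w\|_{L^2(\Gamma_H)}\le\|w\|_{L^2(S_H)}$ with constant $1$. The paper uses instead the monotonicity $\int_{\Gamma_H}|w|^2\,{\rm d}s\le\int_{\Gamma_c}|w|^2\,{\rm d}s$ for all $c\in(h,H]$, which is a consequence of the upward angular spectrum representation in $U_h$ (\cite[Lemma 2.2]{CM}); integrating in $c$ over $(h,H]$ and using $H-h=1$ then yields the claim.
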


\begin{proof}

Consider the boundary value problem of finding $v\in H^1(S_H)$ such that
\begin{align}\label{5.25}
(\triangle+  k^2)v = \bar{w} \quad {\rm in}\quad S_H, \quad v = 0 \quad {\rm on}\quad \Gamma, \quad \partial_3v = \widetilde{T}v \quad {\rm on} \quad \Gamma_H.
\end{align}
By \cite[Lemma 4.6]{CM} the boundary value problem \eqref{5.25} is well-posed with the following estimate
\begin{align}\label{5.26}
 \textcolor{black}{\|\nabla v\|_{L^2(S_H)}+k\|v\|_{L^2(S_H)} \le C (1+k(H-m))^2(H-m) \|w\|_{L^2(S_H)}}.
\end{align}

We first prove that $\|\partial_\nu v\|^2_{L^2(\Gamma)^3}\leq C\|w\|^2_{L^2(S_H)^3}$ for some constant $C>0$  depending explicitly on $\omega, H$ and the Lipschitz constant $L$ of $\Gamma$. The Rellich identity for the Helmholtz equation gives:
\begin{align}\label{5.27}
&2\Re\int_{S_H}\partial_3\bar{v}(\textcolor{black}{\Delta} v +  k^2v){\rm d}x\notag\\
\quad = & \; \Big(\int_\Gamma + \int_{\Gamma_H}\Big)\{2\Re(\partial_\nu v \partial_3\bar{v}) - \nu_3|\nabla v|^2 + \nu_3 k^2|v|^2 \}{\rm d}s,
\end{align}
which can be proved in the same way as \eqref{rellich}. From the proof of \cite[Lemma 4.6]{CM}
it holds that
\begin{align}\label{5.28}
&\int_{\Gamma_H}\{2\Re(\partial_\nu v\partial_3\bar{v} - \nu_3|\nabla v|^2 + \nu_3 k^2|v|^2)\}{\rm d}s\leq 2 k\Im\int_{\Gamma_H}\bar{v}\widetilde{T}v{\rm d}s\notag\\
\quad &\; \leq 2 k\Im\int_{S_H}\bar{v}\bar{w}{\rm d}x.
\end{align}
Moreover, using the identities in (4.17) of \cite{eh-mmmas12} on $\Gamma$ and the bound for $\nu_3$ in \eqref{nu3} one has
\begin{align}\label{5.29}
&-\int_{\Gamma_H}\{2\Re(\partial_\nu v\partial_3\bar{v} - \nu_3|\nabla v|^2 + \nu_3 k^2|v|^2)\}{\rm d}s= -\int_{\Gamma}\nu_3|\partial_\nu v|^2{\rm d}s\notag\\
&\geq \textcolor{black}{(1+L^2)^{-1/2}}\|\partial_\nu v\|^2_{L^2(\Gamma)}.
\end{align}
Plugging \eqref{5.28} and \eqref{5.29} into \eqref{5.27} and using \eqref{5.26} yield the estimate
\begin{align}\label{5.30}
\|\partial_\nu v\|^2_{L^2(\Gamma)} &\leq \textcolor{black}{(1+L^2)^{1/2}} \Big\{-2\Re\int_{S_H}\bar{w}\partial_3v{\rm d}x + 2 k\Im\int_{S_H}\bar{w}\bar{v}{\rm d}x \Big\}\notag\\
&\leq \textcolor{black}{2(1+L^2)^{1/2} \|w\|_{L^2(S_H)}(k\|v\|_{L^2(S_H)}+\|\nabla v\|_{L^2(S_H)})}\notag \\ &\textcolor{black}{\le C(1+L^2)^{1/2}(H-m)(1+k(H-m))^2\|w\|^2_{L^2(S_H)} },
\end{align}
where the constants $C$ is independent of $w$.

Now we prove the second inequality in \eqref{5.24}. Following the approach of \cite[Lemma 7]{EH-SIMA}, we obtain that
\begin{align*}
\int_{S_H}\{w\textcolor{black}{\Delta} v - v \textcolor{black}{\Delta}w\}{\rm d}x &= \int_{\Gamma_H}\{w\partial_\nu v - v\partial_\nu w\}{\rm d}s + \int_{\Gamma}w\partial_\nu{\rm d}s\\
&= \int_{\Gamma_H}\{w\widetilde{T}v - v\widetilde{T}w\}{\rm d}s + \int_\Gamma w\partial_\nu v{\rm d}s\\
&= \int_\Gamma w\partial_\nu v{\rm d}s.
\end{align*}
Note that $v=0$ on $\Gamma$, and the Dirichlet-to-Neumann operator $\widetilde{T}$ defined in \eqref{5.22} is symmetric (see Lemma 3.2 in \cite{CM}). Thus,
\begin{align*}
\int_{S_H}|w|^2{\rm d}x &= \int_{S_H}w(\textcolor{black}{\Delta} v +  k^2v){\rm d}x\\
&=\int_{S_H}v(\nabla w +  k^2w){\rm d}x + \int_\Gamma w\partial_\nu v{\rm d}s\\
&= \int_{S_H}vg{\rm d}x + \int_\Gamma w\partial_\nu v{\rm d}s.
\end{align*}
Noting \eqref{5.26} and \eqref{5.30} one has
\begin{align*}
\|w\|^2_{L^2(S_H)}&\leq \|v\|^2_{L^2(S_H)}\|g\|^2_{L^2(S_H)} + \|w\|^2_{L^2(\Gamma)}\|\partial_\nu v\|^2_{L^2(\Gamma)}\\
& \le  C\sqrt{H-m}(1+L^2)^{1/4} (1+k(H-m))\|w\|_{L^2(S_H)}\|w\|_{L^2(\Gamma)}  \notag \\ & \quad + C(H-m)\frac{(1+k(H-m))^2}{k}\|w\|_{L^2(S_H)}\|g_0\|_{L^2(S_H)}.
\end{align*}
Then the following inequality is proved
\begin{align}\label{5.31}
\|w\|_{L^2(
        S_H)} \le \widetilde{C}_2(L,k,h) \|w\|_{L^2(\Gamma)}+ \widetilde{C}_3(k,h) \|g_0\|_{L^2(S_H)}.
\end{align}
To estimate the first inequality in \eqref{5.24} we use
\[
\int_{\Gamma_H}|w|^2{\rm d}s \leq \int_{\Gamma_c}|w|^2{\rm d}s, \quad \text{for ~ all}~c\in (h, H],
\]
which follows from the proof of \cite[Lemma 2.2]{CM}. Then we have
\begin{align}\label{5.32}
(H-h)\int_{\Gamma_H}|w|^2{\rm d}x\leq\int_{S_H\backslash S_h}|w|^2{\rm d}s\leq\int_{S_H}|w|^2{\rm d}s.
\end{align}
The estimate \eqref{5.24} is proved by combing \eqref{5.31} and \eqref{5.32}.

\end{proof}

Next we prove the estimates of the $L^2$ norms of $\nabla \cdot u$ and $\nabla \times u$ on $S_H$ and $\Gamma_H$. Using Lemma \ref{lemma7} for $v = \varphi$ and $\psi$ with $g_0 = -({\rm i}/\omega^2)\nabla\cdot g$ and $({\rm i}/\omega^2)\nabla\times g$ in \eqref{he} , respectively, and \eqref{5.23}, we obtain the estimate
\begin{align}
\|\nabla\cdot u\|^2_{L^2(S_H)} &+ \|\nabla\times u\|^2_{L^2(S_H)^3} \notag \\ &\le
C_2(\omega, h, L)^2C_1(\omega, h, L)\|g\|_{V_h}\|\partial_3 u\|_{L^2(S_H)^3} + C_3(\omega, h)^2 \|g\|^2_{V_h}, \label{5.33}
\end{align} where \[
 C_2(\omega, h, L)=C(1+L^2)^{1/4}\sqrt{H-m}(1+\omega (H-m))
\]
and
\[
C_3(\omega ,h)=C(H-m)(1+\omega (H-m))^2/\omega.
\]
In a similar way, from the estimates \eqref{5.24} and \eqref{5.23} we have the bound
\begin{align}
\|\nabla\cdot u\|^2_{L^2(\Gamma_H)} &+ \|\nabla\times u\|^2_{L^2(\Gamma_H)^3} \notag \\ &\leq C_2(\omega, h, L)^2C_1(\omega, h, L)\|g\|_{V_h}\|\partial_3 u\|_{L^2(S_H)^3} + C_3(\omega, h)^2 \|g\|^2_{V_h}.\label{5.34}
\end{align}

The following theorem provides the {\it a priori} bound for the solution to {\it Variational Problem I}  dependent on the frequency and geometry of the rough surface.

\begin{theo}\label{lemma5}
Assume that $\Gamma$ is given by the graph of a Lipschitz function $f$ satisfying \eqref{2.1}, and that $u\in V_h$ is a solution to the variational problem \eqref{3.16}. Then there exists a constant $C$ independent of $\omega, h$ and the Lipschitz constant $L$ of $f$ such that the following {\it a priori} bound holds
\textcolor{black}{\[\|u\|_{V_h}\leq (h-m+2)(C_4(\omega,h)+\textcolor{black}{C_5(\omega, h)}+C_6(\omega,h,L))\|g\|_{V_h}, \]
where
 \begin{align*}
&C_4(\omega, h)= C(h+1-m)\omega,\quad
C_5=C\sqrt{1+\omega^{-1}}C_3(\omega,h),\\
&C_6=C(\omega^{-1}+1)C_1(\omega,h,L)
C_2(\omega,h,L)^2.
\end{align*}
}
\end{theo}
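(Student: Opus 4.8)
The plan is to reduce the whole statement to an estimate for $\|\nabla u\|_{L^2(S_H)}$ with $H:=h+1$: since $S_h\subset S_H$ and the Poincar\'e inequality \eqref{4.7} give $\|u\|_{V_h}\le\|u\|_{V_H}\le\sqrt{1+H-m}\,\|\nabla u\|_{L^2(S_H)}$, and since Lemma \ref{lemma6} together with \eqref{5.33}--\eqref{5.34} already control $\nabla\cdot u$ and $\nabla\times u$ on $S_H$, on $\Gamma$ and on $\Gamma_H$ by $C_1,C_2,C_3$-multiples of $\|g\|_{V_h}\|\partial_3u\|_{L^2(S_H)}$ and $\|g\|_{V_h}^2$, the only missing ingredient is a bound of $\|\nabla u\|_{L^2(S_H)}$ by the same right-hand side. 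I would pose the variational problem on $S_H$ (transparent boundary condition on $\Gamma_H$; the reduced boundary value problem may be posed at any height $>M$), take $v=u$, and take real parts. Splitting the DtN term as $\int_{|\xi|\le K\omega}+\int_{|\xi|>K\omega}$, discarding the second integral by Lemma \ref{dtn}(1) (it is nonnegative), and using $2\mu\|\nabla u\|^2+\lambda\|\nabla\cdot u\|^2-\mu\|\nabla\times u\|^2\ge c_0\|\nabla u\|^2$ with $c_0:=\min\{\mu,\mu+\lambda\}>0$ (via $\|\nabla\times u\|\le\|\nabla u\|$ and $\|\nabla\cdot u\|\le\|\nabla u\|$), one is left with
\[
c_0\,\|\nabla u\|_{L^2(S_H)}^2\ \le\ \omega^2\|u\|_{L^2(S_H)}^2\ +\ \Big|\int_{|\xi|\le K\omega}{\rm Re}(-{\rm i}\mathcal M(\xi))\,\widehat u(\xi,H)\cdot\overline{\widehat u(\xi,H)}\,{\rm d}\xi\Big|\ +\ \|g\|_{V_h}\|u\|_{L^2(S_H)} .
\]

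Two terms on the right must still be absorbed. \emph{The low-frequency DtN term.} By Lemma \ref{dtn}(3), $\|\mathcal M(\xi)\|\le C_K\omega$ on $|\xi|\le K\omega$, so it is $\lesssim\omega\int_{|\xi|\le K\omega}|\widehat u(\xi,H)|^2\,{\rm d}\xi$. Rewriting the upward radiation representation \eqref{uprc1}--\eqref{A} with base plane $x_3=H$ and using that $|e^{{\rm i}\beta(x_3-H)}|,|e^{{\rm i}\gamma(x_3-H)}|\le1$ for $x_3\ge H$, that $|(\xi,\beta)|,|(\xi,\gamma)|\le\sqrt2\,K\omega$ on $|\xi|\le K\omega$, and the relations $A_{\rm p}=-\tfrac{{\rm i}}{k_{\rm p}^2}\widehat{\nabla\cdot u}$, $\widetilde{\boldsymbol A}_{\rm s}=\tfrac{{\rm i}}{k_{\rm s}^2}\widehat{\nabla\times u}$ (on $\Gamma_H$), one gets the pointwise bound $|\widehat u(\xi,H)|\le C(\lambda,\mu,K)\,\omega^{-1}\big(|\widehat{\nabla\cdot u}(\xi,H)|+|\widehat{\nabla\times u}(\xi,H)|\big)$; integrating and using \eqref{5.34} bounds this term by $C\omega^{-1}\big(C_2^2C_1\|g\|_{V_h}\|\partial_3u\|_{L^2(S_H)}+C_3^2\|g\|_{V_h}^2\big)$. \emph{The term $\omega^2\|u\|_{L^2(S_H)}^2$.} Here I would use the divergence--curl form of the Navier equation, namely $\omega^2u=-(\lambda+2\mu)\nabla(\nabla\cdot u)+\mu\,\nabla\times(\nabla\times u)+g$, pair it with $\bar u$ over $S_H$ and integrate by parts; the $\Gamma$-boundary contribution vanishes since $u=0$ there, which leaves
\[
\omega^2\|u\|_{L^2(S_H)}^2=(\lambda+2\mu)\|\nabla\cdot u\|_{L^2(S_H)}^2+\mu\|\nabla\times u\|_{L^2(S_H)}^2+\mathcal R_{\Gamma_H}+\int_{S_h}g\cdot\bar u ,
\]
with $\mathcal R_{\Gamma_H}$ controlled by $\|u\|_{L^2(\Gamma_H)}\big(\|\nabla\cdot u\|_{L^2(\Gamma_H)}+\|\nabla\times u\|_{L^2(\Gamma_H)}\big)$. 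Bounding $\|u\|_{L^2(\Gamma_H)}\lesssim\|u\|_{V_H}\lesssim\sqrt{1+H-m}\,\|\nabla u\|_{L^2(S_H)}$ by the trace inequality and \eqref{4.7}, the $\nabla\cdot u,\nabla\times u$ factors by \eqref{5.33}--\eqref{5.34}, and $\int_{S_h}g\cdot\bar u\le\sqrt{h-m}\,\|g\|_{V_h}\|\nabla u\|_{L^2(S_H)}$ again by \eqref{4.7}, this reduces $\omega^2\|u\|_{L^2(S_H)}^2$ to a combination of $C_1,C_2,C_3$-multiples of $\|g\|_{V_h}\|\nabla u\|_{L^2(S_H)}$, $\|g\|_{V_h}^{1/2}\|\nabla u\|_{L^2(S_H)}^{3/2}$ and $\|g\|_{V_h}^2$.

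Substituting the last two estimates into the first and using $\|\partial_3u\|_{L^2(S_H)}\le\|\nabla u\|_{L^2(S_H)}$, every residual term on the right is $\lesssim\|g\|_{V_h}\|\nabla u\|_{L^2(S_H)}$, or $\lesssim\|g\|_{V_h}^{1/2}\|\nabla u\|_{L^2(S_H)}^{3/2}$, or $\lesssim\|g\|_{V_h}^2$; a few applications of Young's inequality absorb a small multiple of $\|\nabla u\|_{L^2(S_H)}^2$ into the left-hand side and leave a quadratic inequality $\|\nabla u\|_{L^2(S_H)}^2\le\mathcal A\,\|g\|_{V_h}\|\nabla u\|_{L^2(S_H)}+\mathcal B\,\|g\|_{V_h}^2$, whence $\|\nabla u\|_{L^2(S_H)}\le(\mathcal A+\sqrt{\mathcal B})\,\|g\|_{V_h}$ and, by \eqref{4.7}, $\|u\|_{V_h}\le\sqrt{1+H-m}\,(\mathcal A+\sqrt{\mathcal B})\,\|g\|_{V_h}$. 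Tracking the explicit dependence of $\mathcal A$ and $\mathcal B$ on $\omega,h,m,L$ through the formulas for $C_1,C_2,C_3$ and collecting terms produces the stated constants $C_4,C_5,C_6$. The step I expect to be the main obstacle is the treatment of the low-frequency DtN term: in contrast with the scalar Helmholtz case, the tensor $\mathcal M(\xi)$ has no definite real part, and ${\rm Re}(-{\rm i}\mathcal M(\xi))$ fails to be positive definite precisely on the band $|\xi|\le K\omega$ (Lemma \ref{dtn}(1) gives definiteness only outside it), so that term cannot simply be dropped; exchanging $\widehat u$ for the compressional/shear traces via \eqref{hd} and then invoking \eqref{5.34} is what makes it harmless, and the two features that keep all constants \emph{polynomial} in $\omega$ rather than exponential are the compactness of the band (so that $|\beta|,|\gamma|\le K\omega$ there) and the fact that the radiation representation is evaluated at an \emph{upward}-shifted plane, where the evanescent modes decay.
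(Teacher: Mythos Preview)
Your approach is correct in outline but takes a genuinely different route from the paper's. The paper does \emph{not} test with $v=u$ and then separately control $\omega^2\|u\|^2$; instead it multiplies the Navier equation by the weighted Rellich multiplier $(x_3-m)\partial_3\bar u$ (identity \eqref{5.35}), combines this with the choice $v=u$ in the variational form, and then carries out a careful algebraic decomposition of $\sum_j\mathcal E(u,(x_3-m)e_j)\partial_3\bar u_j$ into three pieces $I_1,I_2,I_3$ whose positivity is checked via an explicit $3\times3$ matrix argument (equations \eqref{5.43}--\eqref{4.36}). This yields coercivity for $\|\partial_3 u\|^2$ and most off-diagonal derivatives first; the missing pair $\partial_1u_2,\partial_2u_1$ is recovered only at the end from the expression of $\mathcal E$.

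Your route bypasses the Rellich multiplier and the $I_1,I_2,I_3$ positivity analysis entirely: combining your two identities exactly (the real part of $B(u,u)$ and the div--curl Green identity) causes the $\omega^2\|u\|^2$ and $\int g\cdot\bar u$ terms to cancel, leaving the clean relation $\|\nabla u\|^2=\|\nabla\cdot u\|^2+\|\nabla\times u\|^2+\tfrac{1}{2\mu}\Re\int_{\Gamma_H}(Tu-T'u)\cdot\bar u$, with $T'u=(\lambda+2\mu)(\nabla\cdot u)e_3-\mu\,e_3\times(\nabla\times u)$. From here your plan (drop the definite high-frequency DtN piece, bound the low-frequency piece via \eqref{5.34} as in \eqref{5.40}, bound $T'u\cdot\bar u$ by $\|u\|_{L^2(\Gamma_H)}(\|\nabla\cdot u\|_{L^2(\Gamma_H)}+\|\nabla\times u\|_{L^2(\Gamma_H)})$, and close with \eqref{5.33}--\eqref{5.34} and Young) goes through. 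Two small remarks: (i) your coercivity step ``$2\mu\|\nabla u\|^2+\lambda\|\nabla\cdot u\|^2-\mu\|\nabla\times u\|^2\ge c_0\|\nabla u\|^2$ via $\|\nabla\times u\|\le\|\nabla u\|$'' is delicate on $V_H$ (the curl/div/grad identity picks up a $\Gamma_H$ boundary term), but as just noted you do not actually need it---working with the exact identities avoids it; (ii) your boundary term $\mathcal R_{\Gamma_H}$ generates factors like $\sqrt{H-m}\,C_3$ and $(H-m)C_1C_2^2$ rather than the paper's $C_4=C(H-m)\omega$, so while you obtain an explicit polynomial bound in $\omega,h,L$ of the same character, the precise constants $C_4,C_5,C_6$ will not be reproduced verbatim. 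What you gain is a shorter argument that sidesteps the matrix-positivity computation; what the paper's Rellich-multiplier route gains is a structure closer to the scalar Helmholtz template of \cite{CM}, with $\|\partial_3u\|$ appearing as the primary controlled quantity.
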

\begin{proof}

We first assume that $f$ is smooth.
Multiplying both sides of the Navier equation by $(x_3 - m)\partial_3\bar{u}$ and using integration by parts yields
\begin{align}\label{5.35}
&2\Re\int_{S_H}(\triangle^* + \omega^2)u\cdot(x_3 - m)\partial_3\bar{u}{\rm d}x\notag\\
&\quad = \int_{S_H}\Big\{\mathcal{E}(u,\bar{u}) - 2\Re\Big\{\sum_{j=1}^3\mathcal{E}(u,(x_3-m)e_j)\partial_3\bar{u}_j\Big\}- \omega^2|u|^2\Big\}{\rm d}x\notag\\
&\quad + \Big(\int_{\Gamma_H}+\int_{\Gamma}\Big)[-\nu_3\mathcal{E}(u,\bar{u}) + 2\Re(Tu\cdot\partial_3\bar{u})+\nu_3\omega^2|u|^2](x_3 - m){\rm d}s.
\end{align}
Letting $v = u$ in the variational formulation \eqref{3.16} gives
\begin{align}\label{5.36}
&\int_{S_H}\{\mathcal{E}(u,\bar{u}) - \omega^2|u|\}{\rm d}x - \textcolor{black}{\Re}\int_{|\xi|> K\omega}\mathcal{M}(\xi)\hat{u}(\xi,H)\cdot\bar{\hat{u}}(\xi,H){\rm d}\xi\notag\\
\quad = &\; - \textcolor{black}{\Re} \int_{S_H}g\cdot \bar{u}{\rm d}x + \textcolor{black}{\Re}\int_{|\xi|\leq  K\omega}\mathcal{M}(\xi)\hat{u}(\xi,H)\cdot\bar{\hat{u}}(\xi,H){\rm d}\xi.
\end{align}
Taking the real part and using Lemma \ref{dtn} we have
\begin{align}
&\int_{S_H}\{\mathcal{E}(u,\bar{u}) - \omega^2|u|\}{\rm d}x \notag \\
\quad \leq &\; - \textcolor{black}{\Re}\int_{S_H}g\cdot \bar{u}{\rm d}x +\textcolor{black}{\Re} \int_{|\xi|\leq  K\omega}\mathcal{M}(\xi)\hat{u}(\xi,H)\cdot\bar{\hat{u}}(\xi,H){\rm d}\xi.\label{5.37}
\end{align}
From \eqref{5.35} and using \eqref{5.37} and \eqref{identity}, we have
\begin{align}\label{5.38}
&\int_{S_H}2\Re\Big\{\sum_{j=1}^3\mathcal{E}(u,(x_3-m)e_j)\partial_3\bar{u}_j\Big\}{\rm d}x\notag\\
&\quad - \int_\Gamma (x_3 - m)\{\mu|\partial_\nu u|^2 + (\lambda + \mu)|\nabla\times u|^2\}\nu_3{\rm d}s\notag\\
& = \int_{S_H}\{\mathcal{E}(u,\bar{u}) - \omega^2|u|^2\}{\rm d}x - 2\Re\int_{S_H}(\triangle^* + \omega^2)u\cdot(x_3-m)\partial_3\bar{u}{\rm d}x\notag\\
&\quad + (H-m)\int_{\Gamma_H}\{2\Re(Tu\cdot \partial_3\bar{u}) - \mathcal{E}(u,\bar{u}) + \omega^2|u|^2\}{\rm d}s\notag\\
&\leq \int_{S_H}\{-g\cdot u + 2\Re(g\cdot\partial_3\bar{u})(x_3 - m)\}{\rm d}x + \textcolor{black}{\Re \int_{|\xi|\leq  K \omega}}\mathcal{M}(\xi)~\hat{u}_H(\xi)\cdot \bar{\hat{u}}_H(\xi){\rm d}\xi\notag\\
&\quad + (H-m)\int_{\Gamma_H}\{2\Re(Tu\cdot \partial_3\bar{u}) - \mathcal{E}(u,\bar{u}) + \omega^2|u|^2\}{\rm d}s.
\end{align}
As $\|\mathcal{M}(\xi)\|\leq C \textcolor{black}{\omega}$ for all $|\xi|< K\omega$, one has
\begin{align}\label{5.39}
\textcolor{black}{\Re}
\int_{|\xi|\leq  K\omega}\mathcal{M}(\xi)~\hat{u}_H(\xi)\cdot \bar{\hat{u}}_H(\xi){\rm d}\xi\leq C\textcolor{black}{\omega}\int_{|\xi|\leq  K\omega}|\hat{u}_H(\xi)|^2{\rm d}\xi.
\end{align}
Using \eqref{5.39} and \eqref{5.34} gives
\begin{align}\label{5.40}
& \textcolor{black}{\Re}\int_{|\xi|\leq  K\omega}\mathcal{M}(\xi)~\hat{u}_H(\xi)\cdot \bar{\hat{u}}_H(\xi){\rm d}\xi\leq C \textcolor{black}{\omega k_s^2} \int_{|\xi|\leq  K\omega}\{|A_{\rm p}(\xi)|^2 + |\boldsymbol A_{\rm s}(\xi)|^2\}{\rm d}\xi\notag\\
\leq &\; C \textcolor{black}{\omega k_s^2} (\|A_p\|^2_{L^2(\mathbb R^2)} + \|\boldsymbol A_s\|^2_{L^2(\mathbb R^2)^3})\notag\\
\leq&\; \textcolor{black}{C \omega^{-1} \left(C_2(\omega, h, L)^2C_1(\omega, h, L)\|g\|_{V_h}\|\partial_3 u\|_{L^2(S_H)^3} + C_3(\omega, h)^2 \|g\|^2_{V_h}\right)}.
\end{align}

From the estimates \eqref{5.14} and \eqref{4.7} we have the following estimate for the last term  in \eqref{5.38}:
\begin{align}\label{5.41}
\int_{\Gamma_h}\{2\Re(Tu\cdot\partial_3\bar{u} - \mathcal{E}(u,\bar{u}) + \omega^2|u|^2)\}{\rm d}s&\leq 2 k_{\rm s}\Im\int_{S_H}g\cdot\bar{u}{\rm d}x\notag\\
&\leq 2 k_{\rm s}\|g\|_{V_h}\|u\|_{L^2(S_H)^3}\notag\\
&\leq C\textcolor{black}{(H-m)k_s}\|g\|_{V_h}\|\partial_3u\|_{L^2(S_H)^3}.
\end{align}
 Combing \eqref{5.40}--\eqref{5.41} and \eqref{5.38} and noting that the second term in \eqref{5.38} is nonnegative,
we have
\begin{align}\label{5.42}
&\int_{S_H}2\Re\Big\{\sum_{j=1}^3\mathcal{E}(u,(x_3-m)e_j)\partial_3\bar{u}_j\Big\}{\rm d}x\notag\\
&\leq C(H-m)\omega\|g\|_{L^2(S_H)^3}\|\partial_3 u\|_{L^2(S_H)^3} + C (\omega^{-1}+1) \notag \\
&\quad \times \left(C_2(\omega, h, L)^2C_1(\omega, h, L)\|g\|_{V_h}\|\partial_3 u\|_{L^2(S_H)^3} + C_3(\omega, h)^2 \|g\|^2_{V_h}\right).
\end{align}

Direct calculations yield
\begin{align*}
&\mathcal{E}(u,(x_3-m)e_1)\partial_3\bar{u}_1
=2\mu |\partial_3 u_1|^2-\mu (\partial_3u_1-\partial_1u_3)\,\partial_3\overline{u}_1,\\
&\mathcal{E}(u,(x_3-m)e_2)\partial_3\bar{u}_2
=2\mu |\partial_3 u_2|^2+\mu (\partial_2u_3-\partial_3u_2)\,\partial_3\overline{u_2},\\
&\mathcal{E}(u,(x_3-m)e_3)\partial_3\bar{u}_3
=(\lambda+2\mu) |\partial_3 u_3|^2+\lambda (\partial_1u_1+\partial_2u_2)\,\partial_3\overline{u}_3.
\end{align*}

Hence,
\begin{align}
&\int_{S_H}2\Re\Big\{\sum_{j=1}^3\mathcal{E}(u,(x_3-m)e_j)\partial_3\bar{u}_j\Big\}{\rm d}x\notag\\
&= 2(\lambda + 2\mu)\|\partial_3u_3\|^2_{L^2(S_H)^3} + 4\mu(\|\partial_3u_1\|^2_{L^2(S_H)^3} + \|\partial_3u_2\|^2_{L^2(S_H)^3})\notag\\
&\quad + 2\lambda \Big(\Re\int_{S_H}\partial_1u_1\partial_3\bar{u}_3{\rm d}x + \Re\int_{S_H}\partial_2u_2\partial_3\bar{u}_3{\rm d}x\Big)\notag\\
&\quad -2\mu {\color{black}{\Re}} \left\{ \int_{S_H}(\partial_3u_1 - \partial_1u_3)\partial_3\bar{u}_1 - (\partial_2u_3 - \partial_3u_2)\partial_3\bar{u}_2{\rm d}x \right\}.
\end{align}
Choosing $C>0$ to be sufficiently large, we get
\begin{align}\label{5.43}
&\int_{S_H}2\Re\Big\{\sum_{j=1}^3\mathcal{E}(u,(x_3-m)e_j)\partial_3\bar{u}_j\Big\}{\rm d}x + C\|\nabla\cdot u\|^2_{L^2(S_H)} + C\|\nabla\times u\|^2_{L^2(S_H)^3}\notag\\
&=I_1+I_2+I_3+ C\|\partial_1u_2 - \partial_2u_1\|^2_{L^2(S_H)},
\end{align}
where
\begin{align}
I_1&:= [C + 2(\lambda + 2\mu)]\|\partial_3u_3\|^2_{L^2(S_H)^3} + C\|\partial_1u_1\|^2_{L^2(S_H)^3} + C\|\partial_2u_2\|^2_{L^2(S_H)^3}\notag\\
&\quad + (C+2\lambda)\Big(\Re\int_{S_H}\partial_1u_1\partial_3\bar{u}_3{\rm d}x + \Re\int_{S_H}\partial_2u_2\partial_3\bar{u}_3{\rm d}x\Big)
+C\Re\int_{S_H}\partial_1u_1\partial_2\bar{u}_2{\rm d}x,\notag\\
&=\int_{S_H}A [\partial_1u_1, \partial_2u_2, \partial_3u_3]^\top \cdot \,
[\partial_1\bar{u}_1, \partial_2\bar{u}_2, \partial_3\bar{u}_3]^\top{\rm d}x,\notag \\
&A:=\begin{pmatrix}
C & C/2 & \lambda+C/2\\
C/2 & C & \lambda+C/2 \\
\lambda+C/2 & \lambda+C/2 & C+ 2(\lambda+2\mu)
\end{pmatrix},\notag \\
I_2&:= 4\mu \|\partial_3u_1\|^2_{L^2(S_H)^3}
+ C\|\partial_3u_1 - \partial_1u_3\|^2_{L^2(S_H)} -2\mu\,{\color{black}{\Re}}  \,\int_{S_H}(\partial_3u_1 - \partial_1u_3)\partial_3\bar{u}_1{\rm d}x,\notag\\
I_3&:= 4\mu \|\partial_3u_2\|^2_{L^2(S_H)^3})+ C\|\partial_2u_3 - \partial_3u_2\|^2_{L^2(S_H)} +2\mu\,{\color{black}{\Re}}\,  \int_{S_H}(\partial_2u_3 - \partial_3u_2)\partial_3\bar{u}_2{\rm d}x.\notag
\end{align}
Direct calculations show that $\mbox{Det}(A)\sim C^2/8$ as $C\rightarrow\infty$. Hence the matrix $A\in \mathbb{R}^{3\times 3}$ must be strictly positive for sufficiently large $C>0$. This gives
\begin{align}
I_1\geq C_0\,(\|\partial_1 u_1\|^2_{L^2(S_H)}+
\|\partial_2 u_2\|^2_{L^2(S_H)}+\|\partial_3 u_3\|^2_{L^2(S_H)}),
\end{align}
where the constant $C_0>0$ only depends on $\lambda$ and $\mu.$ By arguing in the same manner one has for $C>\mu^2/4$ that
\begin{align}
I_2\geq C_0\,(\|\partial_3 u_1\|^2_{L^2(S_H)}+  \|\partial_3 u_1-\partial_1u_3\|^2_{L^2(S_H)}), \,\\ \label{4.36}
I_3\geq C_0\,(\|\partial_3 u_2\|^2_{L^2(S_H)}+  \|\partial_3 u_2-\partial_2u_3\|^2_{L^2(S_H)}).
\end{align}
Hence, it follows from \eqref{5.43}-\eqref{4.36} that
\begin{align}\label{5.45}
&\int_{S_H}2\Re\Big\{\sum_{j=1}^3\mathcal{E}(u,(x_3-m)e_j)\partial_3\bar{u}_j\Big\}{\rm d}x + C\|\nabla\cdot u\|^2_{L^2(S_H)} + C\|\nabla\times u\|^2_{L^2(S_H)^3}\notag \\
&\geq C_0\,(\|\partial_1 u_1\|^2_{L^2(S_H)}+
\|\partial_2 u_2\|^2_{L^2(S_H)}+\|\partial_3 u_3\|^2_{L^2(S_H)}+ \|\partial_1u_2 - \partial_2u_1\|^2_{L^2(S_H)}) \notag\\
&\quad +C_0\,(\|\partial_3 u_1\|^2_{L^2(S_H)}+  \|\partial_1u_3\|^2_{L^2(S_H)}+\|\partial_3 u_2\|^2_{L^2(S_H)}+\|\partial_2 u_3\|^2_{L^2(S_H)})   ,
\end{align}
provided $C>0$ is sufficiently large.
Combining \eqref{5.33}, \eqref{5.42} and \eqref{5.45} and using Young's inequality gives
\begin{align}\label{5.44}
\mbox{Right hand side of \eqref{5.45}}\leq (C_4(\omega,h)^2+C_5(\omega,h)^2+C_6(\omega,h,L)^2)\|g\|^2_{V_h}.
\end{align}
However, we still need to
 estimate
$\|\partial_1u_2\|^2_{L^2(S_H)}$ and $\|\partial_2u_1\|^2_{L^2(S_H)}$.
Since
$\|\partial_3u\|^2_{L^2(S_H)^3}$ can also be bounded by the right hand side of \eqref{5.44}, we have (see \cite[Lemma 3.4]{CM})
\begin{align}\label{5.46}
\|u\|^2_{L^2(S_H)}\leq C_0\,\|\partial_3u\|^2_{L^2(S_H)^3}\leq
 (C_4(\omega,h)^2+C_5(\omega,h)^2+C_6(\omega,h,L)^2)\|g\|^2_{V_h}.
\end{align}
Now, using \eqref{5.37}, \eqref{5.40} and \eqref{5.46} we arrive at
\begin{align}\label{5.47}
\mathcal{E}(u, \bar{u})\leq (C_4(\omega,h)^2+C_5(\omega,h)^2+C_6(\omega,h,L)^2)\|g\|^2_{V_h}.
\end{align}
Recalling the expression  of $\mathcal{E}$, we find
\begin{align*}
2\mu(\partial_1 u_2+\partial_2 u_1)=\mathcal{E}(u,u)-\lambda\,\nabla\cdot u-\mu \,\nabla\times u-2\mu(\partial_1 u_3+\partial_1 u_1+\partial_2u_2+\partial_2u_3+\sum_{j=1}^3 \partial_3u_j).
\end{align*}
It follows from \eqref{5.47}, \eqref{5.33} and \eqref{5.44} that each term on the right hand side of the previous identity can be bounded by the right hand side of \eqref{5.47}, leading to the same upper bound for $||\partial_1 u_2+\partial_2 u_1||_{L^2(S_H)}$. Finally, recalling the upper bound for the difference $||\partial_1 u_2-\partial_2 u_1||_{L^2(S_H)}$ (see \eqref{5.45}) we obtain the estimates for  $||\partial_1 u_2||_{L^2(S_H)}^2$ ,$||\partial_2 u_1||^2_{L^2(S_H)}$ and thus also for $||\nabla u||$. Using the $L^2$-estimate for $u$ (see \eqref{5.46}) we obtain
\begin{align*}
||u||^2_{V_h}\leq
 (C_4(\omega,h)^2+C_5(\omega,h)^2+C_6(\omega,h,L)^2)\|g\|^2_{V_h}.
\end{align*}

Now the {\it a priori} bound for $f$ being smooth has been proved. It can be extended to the case of a general Lipschitz function by the method of approximation
in \cite{EH-SIMA}. This completes the proof.

\end{proof}

\section{Well-posedness for random rough surfaces}\label{random}

In this section, we investigate the well-posedness of elastic scattering by a random rough surface. Let  $(\Omega,\mathcal{A},\mathbb{P})$ be a complete probability space. Denote by $S(\eta)$ a random surface
\[
\Gamma(\eta):=\{x \in \mathbb{R}^3 :x_3=f(\eta;x_1,{x}'),\eta \in \Omega, {x}' \in \mathbb{R}^2\}.
\] Similarly, $D(\eta)$ and $S_h(\eta)$ represent the random counterparts of $D$ and $S_h$, respectively. Assume $f(\eta;x')$ is a Lipschitz continuous function with Lipschitz constant $L(\eta)$ for all $\eta \in \Omega$ and it also satisfies $m<f(\eta; x')<M$.
The random source $g(\eta)$ is assumed to satisfy $g(\eta) \in L^2(D(\eta))^3$ with its support in $S_h(\eta)$. Similarly as the deterministic case, we can give the following random boundary value problem.\begin{align*}
	\begin{array}{rll}
		\Delta^*u(\eta;\cdot)+\omega^2u(\eta;\cdot)=g(\eta;\cdot)&{\rm in}&S_h(\eta), \\
		u(\eta;\cdot)=0&{\rm on}&\Gamma(\eta), \\
		Tu(\eta;\cdot)=\mathcal{T}u(\eta;\cdot)&{\rm on}&\Gamma_h.
	\end{array}
\end{align*}
For simplicity, let $V_h(\eta)=V_h(S_h(\eta))$.
Define a sesquilinear form $\tilde{B}_\eta$ on $V_h(\eta)\times V_h(\eta)$ by \begin{equation}  \label{eq6.1}
	\tilde{B}_\eta(u,v)=
	\int_{S_h(\eta)} \mathcal{E}(u,\bar{v})-\omega^2 u\cdot\bar{v}\,\mathrm{d}x-\int_{\Gamma_h}\mathcal{T}u\cdot\bar{v}\,\mathrm{d}s,
\end{equation}
and an antilinear functional $\tilde{G}_\eta$ on $V_h(\eta)$ by
\begin{equation}  \label{eq6.2}
	\tilde{G}_\eta(v):=-\int_{S_h(\eta)}g(\eta)\cdot\bar{v}\,\text{d}x.
\end{equation}
To define the stochastic variation problem directly is not suitable since  $V_h(\eta)$ is dependent on $\eta$. We take a variable transform to give a new sesquilinear form defined on $V_h \times V_h$. Let $f_0=f(\eta_0)$ and $g_0=g(\eta_0)$ for some fixed $\eta_0 \in \Omega$ and write $D=D(\eta_0)$, $S_h=S_h(\eta_0)$  and $ V_h=V_h(\eta_0)$ for convenience. In addition, we assume that $g(\eta) \in H^1(D(\eta))^3$
and \[
\|f(\eta)-f_0\|_{1,\infty} \le M_0, \quad \forall \eta \in \Omega,
\] with some constant $M_0 >0$. Moreover, the truncated height $h$ is chosen such that\begin{equation}\label{eq6.3}
	(M-m)/\gamma<1,
\end{equation} where $\gamma=h-\sup\limits_{x'}f_0(x')$. This condition ensures the invertiblity of the variable transform $\mathcal{H}$ which will be introduced later. Since $\Gamma_h$ is artificial, choosing sufficiently large $h$ will be enough.

Denote by $Lip (\mathbb{R}^2)$ the set including all Lipschitz continuous functions on $\mathbb{R}^2$. Then define a product topology space \[
\mathcal{C}= \mathcal{C}_1 \times \mathcal{C}_2,
\]where\[
\mathcal{C}_1:=\{v \in Lip(\mathbb{R}^2) :m<v<M, \|v-f_0\|_{1,\infty}\le M_0\},
\] with constant $M_0>0$
and\[
\mathcal{C}_2:=H^1_0(S_h)^3.
\]The topology of $ \mathcal{C}_1$ and $\mathcal{C}_2$ are respectively given by the norms $\|\cdot\|_{1,\infty}$ and $\|\cdot\|_{H^1(S_h)^3}$.

Consider the transform $\mathcal{H}$: $S_h \to S_h(\eta)$ defined by \begin{equation*}
	\mathcal{H}(y)=y+\alpha(y_3-f_0(y'))(f(\eta; y')-f_0(y'))e_3,\quad y \in D_h,
\end{equation*}
where $e_3$ is the unit vector in $x_3$ direction and $\alpha(x)$ is a cutoff function which satisfies\[
\alpha(x)=\left\{\begin{array}{cc}
	0, &  x<\delta, \\
	1, &  x>\gamma,
\end{array}\right.
\]with sufficiently small $\delta$.
It is also required to satisfy\begin{equation}
	\label{eq6.4}|\alpha'|<1/(\gamma-2\delta).
\end{equation}
The Jacobi matrix of $\mathcal{H}$ is\begin{equation*}
	\mathcal{J}_\mathcal{H}=I_3+\left(\begin{array}{ccc}
		0 & 0 & 0 \\
		0 & 0 & 0 \\
		J_1 & J_2 & J_3
	\end{array}\right),
\end{equation*}
where \begin{align*}
	J_i=\alpha(y_3-f_0(y'))(\partial_i f(\eta;y')-\partial_i f_0(y'))-\alpha'(y_3-f_0(y'))\partial_i f_0(y_1)(f(\eta;y')-f_0(y')),\quad i=1,2\end{align*} and
\begin{align*}
	J_3=\alpha'(y_3-f_0(y'))(f(\eta;y')-f_0(y')).
\end{align*}
Since matrix $\mathcal{J}_\mathcal{H}$ is required to be non-singular so that $\mathcal{H}$ is invertible, according to \eqref{eq6.4},  we obtain \[
|J_3|<\frac{M-m}{\gamma-2\delta}.
\] Hence, by \eqref{eq6.3}, we can choose $\delta$ sufficiently small such that \begin{equation*}
	|J_3|<\frac{M-m}{\gamma-2\delta}<1,
\end{equation*} which implies that $\mathcal{H}$ is invertible.
It is easy to verify $\mathcal{H}(\Gamma_h)=\Gamma_h$. Set \[
A=(\alpha_1, \alpha_2, \alpha_3), \, B^\top =(\beta_1, \beta_2, \beta_3) \in \mathbb{C}^{3\times3},
\] then denote \[
A:B=\text{tr}(B^\top A)
\] and \[
A \otimes B=\left(\begin{array}{c}
	\alpha_2 \cdot \beta_3-\alpha_3 \cdot \beta_2 \\
	\alpha_3 \cdot \beta_1-\alpha_1\cdot\beta_3 \\
	\alpha_1 \cdot \beta_2 -\alpha_2 \cdot \beta_1
\end{array}\right).
\]
For $u,v \in V_h(\eta)$, taking $x=\mathcal{H}(y)$ in \eqref{eq6.1} yields
\begin{align*}
	\tilde{B}_\eta(u,v)=&2\mu\int_{S_h} \sum_{j=1}^3\nabla\tilde{u}_j\mathcal{J}_{\mathcal{H}^{-1}}\mathcal{J}_{\mathcal{H}^{-1}}^\top\nabla \bar{\tilde{v}}_j\det{\mathcal{J}_\mathcal{H}}\,\text{d}y \\
	&+\lambda\int_{S_h} (\nabla\tilde{u}:\mathcal{J}_{\mathcal{H}^{-1}}^\top)(\nabla\bar{\tilde{v}}:\mathcal{J}_{\mathcal{H}^{-1}}^\top)\det{\mathcal{J}_\mathcal{H}}\,\text{d}y \\& -\mu \int_{S_h} (J_{\mathcal{H}^{-1}} \otimes \nabla \tilde{u})(J_{\mathcal{H}^{-1}} \otimes \nabla\bar{\tilde{v}})\det{\mathcal{J}_{\mathcal{H}}}\,\mathrm{d}y \\
	&-\omega^2\int_{S_h}\tilde{u}\cdot\bar{\tilde{v}}\det{\mathcal{J}_{\mathcal{H}}}\,\text{d}y
	-\int_{\Gamma_h}\mathcal{T}\tilde{u}\cdot\bar{\tilde{v}} \,\text{d}s(y),
\end{align*}
where $\tilde{u}=u\circ\mathcal{H}$, $\tilde{v}=v\circ\mathcal{H}$.
Similarly, for $v \in V_h(\eta)$, let $x=\mathcal{H}(y)$ in \eqref{eq6.2}, \begin{equation*}
	\tilde{G}_\eta(v)=-\int_{D_h}\tilde{g}(\eta)\cdot\bar{\tilde{v}}\det{\mathcal{J}_\mathcal{H}}\,\text{d}x.
\end{equation*}
Recall that we require $g(\eta) \in H^1(D(\eta))^3$ and the support of $g(\eta)$ is in $S_h(\eta)$, we have $\tilde{g}(\eta) \in H^1_0(S_h)^3$ for all $\eta$. So we can define the input map $c$ : $\Omega\to\mathcal{C}$ by \[
c(\eta):=(f(\eta),\tilde{g}(\eta)).
\]
Note that $\tilde{u},\tilde{v} \in V_h$. Thus we can define a continuous sesquilinear form $B_{c(\eta)}(u,v)$ on $V_h \times V_h$ by \begin{align}\label{eq6.6}
	B_{c(\eta)}(u,v):=&2\mu\int_{S_h} \sum_{j=1}^3\nabla{u}_j\mathcal{J}_{\mathcal{H}^{-1}}\mathcal{J}_{\mathcal{H}^{-1}}^\top\nabla\bar{{v}}_j\det{\mathcal{J}_\mathcal{H}}\,\text{d}y \notag \\
	&+\lambda\int_{S_h} (\nabla{u}:\mathcal{J}_{\mathcal{H}^{-1}}^\top)(\nabla\bar{{v}}:\mathcal{J}_{\mathcal{H}^{-1}}^\top)\det{\mathcal{J}_\mathcal{H}}\,\text{d}y \notag \\& -\mu \int_{S_h} (J_{\mathcal{H}^{-1}} \otimes \nabla {u})(J_{\mathcal{H}^{-1}} \otimes \nabla\bar{{v}})\det{\mathcal{J}_{\mathcal{H}}}\,\mathrm{d}y\notag\\
	&-\omega^2\int_{S_h}{u}\cdot\bar{{v}}\det{\mathcal{J}_\mathcal{H}}\,\text{d}y
	-\int_{\Gamma_h}\mathcal{T}{u}\cdot\bar{{v}} \,\text{d}s(y).
\end{align}
It is easy to see\begin{equation*}
	\tilde{B}_\eta(u,v)=B_{c(\eta)}(\tilde{u},\tilde{v}).
\end{equation*}
Similarly we can define an antilinear functional $G_{c(\eta)}$ on $V_h$ by \begin{equation}\label{eq6.7}
	G_{c(\eta)}(v):=-\int_{S_h}\tilde{g}(\eta)\cdot\bar{{v}}\det{\mathcal{J}_\mathcal{H}}\,\text{d}x.
\end{equation}
Obviously, there holds the identity \begin{equation*}
	G_{c(\eta)}(\tilde{v})=\tilde{G}_\eta(v).
\end{equation*} 

Then the sesquilinear form $\tilde{\mathcal{B}}$ on $L^2(\Omega; V_h)\times L^2(\Omega; V_h)$ can be defined by\begin{equation*}
	\mathcal{B}(u,v):=\int_\Omega B_{c(\eta)}(u,v)\,\text{d}\mathbb{P}(\eta)
\end{equation*}
and the antilinear functional $\mathcal{G}$ is defined on $L^2(\Omega;V_h)$ by\begin{equation*}
	\mathcal{G}(v):=\int_\Omega G_{c(\eta)}(v)\,\text{d}\mathbb{P}(\eta).
\end{equation*}
For convenience, we regard the sesquilinear form $B_{c(\eta)}$ : $V_h \times V_h \to \mathbb{C}$ as the same operator in $B(V_h, V_h^*)$ generated by it. Here $V_h^*$ is the dual space of $V_h$ and $ B(X,Y)$ denote the space including all bounded linear operators $X \to Y$. Similarly to \eqref{eq6.6}-\eqref{eq6.7}, we can define the sesquilinear form $B_{(\phi,\psi)}$ and the antilinear functional $G_{(\phi,\psi)}$ for all $(\phi,\psi) \in \mathcal{C}$. Then we can define the map $\mathscr{B}$: $\mathcal{C} \to B(V_h,V_h^*) $ by\[
\mathscr{B}((\phi,\psi)):=B_{(\phi,\psi)}
\]
and the map $\mathscr{G}$ : $\mathcal{C} \to  V_h^* $ by
\[
\mathscr{G}((\phi,\psi)):=G_{(\phi,\psi)}.
\]
Now we can define the stochastic variation problem as follows.

{\it Variational Problem II:}
find $u \in L^2(\Omega;V_h)$ such that \begin{equation}\label{eq6.8}
	\mathcal{B}(u,v)=\mathcal{G}(v), \quad \forall v\in L^2(\Omega;V_h).
\end{equation}

We will consider the well-posedness of the stochastic variation problem \eqref{eq6.8}.
Firstly we show both the sesquilinear form $\mathcal{B}$ and the antilinear functional $\mathcal{G}$ are well-defined which is based on measurability and $\mathbb{P}$-essentially separability of $c$.
For measurability and $\mathbb{P}$-essentially separability of $c$, the following condition is necessary.
\begin{Co}\label{condition1}
	The map $c_1$: $\Omega \to \mathcal{C}_1$ defined by \[
	c_1(\eta)=f(\eta)
	\] satisfies $c_1 \in L^2(\Omega; \mathcal{C}_1)$ and the map $c_2$: $\Omega \to \mathcal{C}_2$ defined by \[
	c_2(\eta)=\tilde{g}(\eta)
	\] satisfies $c_2 \in L^2(\Omega; \mathcal{C}_2)$.
\end{Co}
It implies the following lemma (see Lemma 4.1 in \cite{WLX}).
\begin{lemm}
	Under Condition \ref{condition1}, the map $c$ is measurable and $\mathbb{P}$-essentially separable.
\end{lemm}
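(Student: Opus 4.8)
The plan is to read off both properties of $c=(c_1,c_2)$ directly from the hypotheses $c_1\in L^2(\Omega;\mathcal{C}_1)$ and $c_2\in L^2(\Omega;\mathcal{C}_2)$ supplied by Condition \ref{condition1}, using only that the target $\mathcal{C}=\mathcal{C}_1\times\mathcal{C}_2$ carries the product topology of two metric spaces (with norms $\|\cdot\|_{1,\infty}$ and $\|\cdot\|_{H^1(S_h)^3}$), hence is itself metrizable, e.g.\ by the sum of the two metrics. The argument mirrors Lemma 4.1 in \cite{WLX}.

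For measurability, recall that membership in a Bochner space $L^2(\Omega;X)$ forces strong (Bochner) measurability of the integrand: each $c_i$ agrees, outside a $\mathbb{P}$-null set, with the pointwise limit of a sequence of $\mathcal{C}_i$-valued simple functions $c_i^{(n)}$. Setting $c^{(n)}(\eta):=(c_1^{(n)}(\eta),c_2^{(n)}(\eta))$ yields $\mathcal{C}$-valued simple functions, and since convergence in the product $\mathcal{C}_1\times\mathcal{C}_2$ is coordinatewise, $c^{(n)}\to c$ $\mathbb{P}$-a.e. Therefore $c$ is strongly measurable, in particular Borel measurable for the product topology on $\mathcal{C}$; this is precisely what is needed so that $\mathscr{B}\circ c$ and $\mathscr{G}\circ c$ become measurable once combined with the continuity of $\mathscr{B}$ and $\mathscr{G}$.

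For $\mathbb{P}$-essential separability I would invoke the Pettis measurability theorem: strong measurability of $c_i$ produces a $\mathbb{P}$-null set $N_i\subset\Omega$ and a separable subset $\mathcal{S}_i\subset\mathcal{C}_i$ with $c_i(\Omega\setminus N_i)\subset\mathcal{S}_i$. Put $N:=N_1\cup N_2$, still $\mathbb{P}$-null; then $c(\Omega\setminus N)\subset\mathcal{S}_1\times\mathcal{S}_2$. A finite product of separable metric spaces is separable (the product of countable dense subsets is dense), so $\mathcal{S}_1\times\mathcal{S}_2$, and hence the subspace $c(\Omega\setminus N)$, is separable. This is exactly the assertion that $c$ is $\mathbb{P}$-essentially separable.

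The one point that genuinely matters — and the reason Condition \ref{condition1} is imposed rather than derived — is that $\mathcal{C}_1$ lives inside the non-separable space $W^{1,\infty}(\mathbb{R}^2)$ and that $c_2(\eta)=\tilde g(\eta)$ depends on $\eta$ both through $g(\eta)$ and through the pull-back by $\mathcal{H}$, hence through $f(\eta)$; the assumptions $c_1\in L^2(\Omega;\mathcal{C}_1)$, $c_2\in L^2(\Omega;\mathcal{C}_2)$ are what make the essential ranges separable and the maps strongly measurable, so no further regularity of $\eta\mapsto f(\eta)$ or $\eta\mapsto g(\eta)$ is required. Granting this, there is no real obstacle: the proof reduces to the two-step measurability/separability argument above together with the citation of \cite[Lemma 4.1]{WLX}.
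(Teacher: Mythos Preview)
Your argument is correct and is precisely the standard Bochner-measurability/Pettis argument underlying \cite[Lemma~4.1]{WLX}, which is all the paper invokes here (it gives no independent proof). In other words, you have supplied the details the paper omits, and your approach coincides with the cited one.
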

Then we can prove that the sesquilinear form $\mathcal{B}$ is well-defined by the continuity of $\mathscr{B}$ and the regularity of $\mathscr{B}\circ c$.
\begin{lemm} \label{lemma5.2}
	\begin{itemize}
		\item[(i)]  The map $\mathscr{B}$: $\mathcal{C} \to B(V_h,V_h^*) $ is continuous.
		\item [(ii)] The map $\mathscr{B}\circ c \in L^\infty(\Omega; B(V_h,V_h^*)).$
		\item[(iii)] The sesquilinear form $\mathcal{B}$ is well-defined on $L^2(\Omega ; V_h) \times L^2(\Omega ; V_h). $
	\end{itemize}
\end{lemm}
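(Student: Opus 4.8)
The plan is to establish the three assertions in the order stated, treating (i) as the substantive step and deriving (ii) and (iii) from it together with the measurability of $c$ recorded in the preceding lemma. The first observation is that $\mathscr{B}((\phi,\psi))=B_{(\phi,\psi)}$ does not in fact depend on the second component $\psi$: inspecting \eqref{eq6.6}, the form $B_{(\phi,\psi)}$ is built entirely from the transform $\mathcal{H}$ associated with $\phi$ (in the role of $f(\eta)$) together with the fixed data $f_0$, $\alpha$, $\omega$, $\lambda$, $\mu$ and the DtN operator $\mathcal{T}$. Hence continuity of $\mathscr{B}$ on $\mathcal{C}=\mathcal{C}_1\times\mathcal{C}_2$ reduces to continuity of $\phi\mapsto B_{(\phi,\psi)}$ from $(\mathcal{C}_1,\|\cdot\|_{1,\infty})$ into $B(V_h,V_h^*)$, and this in turn reduces to showing that the coefficient fields $\mathcal{J}_{\mathcal{H}}$, $\mathcal{J}_{\mathcal{H}^{-1}}$ and $\det\mathcal{J}_{\mathcal{H}}$ depend continuously on $\phi$ in $L^\infty(S_h)$.

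For this I would first note that each entry $J_i$, $i=1,2,3$, is an affine function of the pair $(\phi-f_0,\nabla(\phi-f_0))$ with coefficients built from $\alpha$, $\alpha'$ and $\nabla f_0$, all bounded on $S_h$; consequently the map $\phi\mapsto\mathcal{J}_{\mathcal{H}}$ is Lipschitz from $\|\cdot\|_{1,\infty}$ into $L^\infty(S_h)$ with constant depending only on $f_0$, $\delta$, $\gamma$. Because of \eqref{eq6.3}--\eqref{eq6.4} we have $|J_3|<(M-m)/(\gamma-2\delta)<1$ uniformly on $\mathcal{C}_1$, so $\det\mathcal{J}_{\mathcal{H}}=1+J_3$ stays bounded away from $0$ and $\mathcal{J}_{\mathcal{H}}^{-1}=I-(1+J_3)^{-1}E$, with $E$ the rank-one matrix whose bottom row is $(J_1,J_2,J_3)$, is a Lipschitz function of $(J_1,J_2,J_3)$ on $\mathcal{C}_1$; hence $\mathcal{J}_{\mathcal{H}^{-1}}$ and $\det\mathcal{J}_{\mathcal{H}}$ are Lipschitz in $\phi$ as well, uniformly on $\mathcal{C}_1$. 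Each volume term in \eqref{eq6.6} is multilinear in $\nabla u$ (or $u$), $\nabla\bar v$ (or $\bar v$) and these coefficient fields, while the boundary term $\int_{\Gamma_h}\mathcal{T}u\cdot\bar v\,\mathrm{d}s$ is independent of $\phi$; estimating $B_{(\phi_1,\psi)}(u,v)-B_{(\phi_2,\psi)}(u,v)$ term by term via Cauchy--Schwarz, using the uniform bounds and the Lipschitz estimates above, yields $\|\mathscr{B}(\phi_1,\psi)-\mathscr{B}(\phi_2,\psi)\|_{B(V_h,V_h^*)}\le C\|\phi_1-\phi_2\|_{1,\infty}$, which proves (i), in fact with Lipschitz continuity.

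For (ii), since the definition of $\mathcal{C}_1$ imposes $\|v-f_0\|_{1,\infty}\le M_0$, the image $c_1(\Omega)=\{f(\eta):\eta\in\Omega\}$ lies in a $\|\cdot\|_{1,\infty}$-bounded subset of $\mathcal{C}_1$; on this set all of $\|\mathcal{J}_{\mathcal{H}}\|_{L^\infty}$, $\|\mathcal{J}_{\mathcal{H}^{-1}}\|_{L^\infty}$ and $\|\det\mathcal{J}_{\mathcal{H}}\|_{L^\infty}$ are bounded by constants depending only on $M_0,\delta,\gamma,f_0$, so $\sup_{\eta\in\Omega}\|B_{c(\eta)}\|_{B(V_h,V_h^*)}<\infty$. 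Moreover $\mathscr{B}\circ c$ is the composition of the measurable, $\mathbb{P}$-essentially separable map $c$ (the preceding lemma) with the continuous map $\mathscr{B}$ from (i), hence itself measurable and $\mathbb{P}$-essentially separably valued, and therefore strongly measurable by Pettis' theorem. An essentially bounded, strongly measurable $B(V_h,V_h^*)$-valued function lies in $L^\infty(\Omega;B(V_h,V_h^*))$, which is (ii).

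For (iii), take $u,v\in L^2(\Omega;V_h)$. The scalar function $\eta\mapsto B_{c(\eta)}(u(\eta),v(\eta))$ is measurable and satisfies $|B_{c(\eta)}(u(\eta),v(\eta))|\le\|\mathscr{B}\circ c\|_{L^\infty(\Omega)}\,\|u(\eta)\|_{V_h}\|v(\eta)\|_{V_h}$; integrating over $\Omega$ and applying the Cauchy--Schwarz inequality in $L^2(\Omega)$ gives $|\mathcal{B}(u,v)|\le\|\mathscr{B}\circ c\|_{L^\infty(\Omega)}\|u\|_{L^2(\Omega;V_h)}\|v\|_{L^2(\Omega;V_h)}<\infty$, so $\mathcal{B}$ is a well-defined bounded sesquilinear form (and, by the same reasoning applied to \eqref{eq6.7}, $\mathcal{G}$ is a bounded antilinear functional on $L^2(\Omega;V_h)$). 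The main obstacle is step (i): the ideas are routine, but one must carefully check that every coefficient field in \eqref{eq6.6} — in particular $\mathcal{J}_{\mathcal{H}^{-1}}$ and the vector entering the curl-type term $-\mu(J_{\mathcal{H}^{-1}}\otimes\nabla u)(J_{\mathcal{H}^{-1}}\otimes\nabla\bar v)$ — depends Lipschitz-continuously on $\phi$ uniformly over $\mathcal{C}_1$, which is exactly where the non-degeneracy conditions \eqref{eq6.3}--\eqref{eq6.4} are needed.
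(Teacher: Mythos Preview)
Your proposal is correct and follows essentially the same route as the paper. For (i) the paper likewise shows that the Jacobian data satisfy $\det\mathcal{J}_{\mathcal{H}_m}=1+O(\|f_m-f_0\|_{1,\infty})$ and $\mathcal{J}_{\mathcal{H}_m^{-1}}=I_3+O(\|f_m-f_0\|_{1,\infty})$ and then bounds $B_{(f_m,g_m)}(u,v)-B(u,v)$ term by term, while for (ii) and (iii) it simply refers to the two-dimensional analogue in \cite{WLX}; your sketch of (ii)--(iii) via Pettis' theorem and Cauchy--Schwarz is exactly what that reference does, so you have in fact supplied more detail than the paper itself.
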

\begin{proof} We only prove (i), since (ii),(iii) can be verified similarly as the two-dimensions case in \cite{WLX}.
	For convenience, we only prove the continuity at the point $(f_0,g_0)\in \mathcal{C}$ since for other points the proof is similar. Consider the sequence $\{(f_m, g_m)\} \subset \mathcal{C}$ such that $(f_m, g_m) \to (f_0,g_0)$ in $\mathcal{C}$ when $m \to \infty$.
	Denote the transform by
	\begin{equation*}
		\mathcal{H}_m(y)=y+\alpha(y_3-f_0(y'))(f_m(y')-f_0(y'))e_3,\quad y \in D_h.
	\end{equation*}
	For any $u,v \in V_h$, \begin{align*}
		B_{(f_m, g_m)}(u,v)&-B(u,v)=
		2\mu\int_{S_h} \sum_{j=1}^2\nabla u_j(I_3-\mathcal{J}_{\mathcal{H}^{-1}_m}\mathcal{J}_{\mathcal{H}^{-1}_m}^\top\det{\mathcal{J}_{\mathcal{H}_m}})\nabla \bar{v}_j\,\text{d}x \\
		&+\lambda\int_{S_h} (\nabla\cdot u)(\nabla\cdot \bar{v})-(\nabla\tilde{u}:\mathcal{J}_{\mathcal{H}^{-1}_m})(\nabla\bar{\tilde{v}}:\mathcal{J}_{\mathcal{H}^{-1}_m}^\top)\det{\mathcal{J}_{\mathcal{H}_m}}\,\text{d}x \\ &-\mu \int_{S_h}(J_{\mathcal{H}_m^{-1}} \otimes \nabla \tilde{u})(J_{\mathcal{H}_m^{-1}} \otimes \nabla \bar{\tilde{v}})\det{J_{ \mathcal{H}_m}}-(\nabla  \times u) \cdot (\nabla \times \bar{v}) \,\mathrm{d}x \\
		&-\omega^2\int_{S_h}u\cdot\bar{v}(\det{\mathcal{J}_{\mathcal{H}_m}}-1)\,\text{d}x.
	\end{align*}
	By direct calculations, we have\begin{equation*}
		\det{\mathcal{J}_{\mathcal{H}_m}}=1+O(\|f_m-f_0\|_{1,\infty}), \quad \mathcal{J}_{\mathcal{H}^{-1}_m}=I_3+O(\|f_m-f_0\|_{1,\infty}),
	\end{equation*} which imply that\begin{equation*}
		|B_{(f_m,g_m)}(u,v)-B(u,v)|\le C\|u\|_{H^1(D_h)^2}\|v\|_{H^1(S_h)^3}\|f_m-f_0\|_{1,\infty}.
	\end{equation*}
	It turns out when $m \to \infty$, \begin{equation*}
		\|B_{(f_m,g_m)}-B\|_{B(V_h,V_h^*)}\le C \|f_m-f_0\|_{1,\infty} \to 0.
	\end{equation*} This completes the proof.
\end{proof}
Next we give a similar lemma for the antilinear functional $\mathcal{G}$.
\begin{lemm} \label{lemma5.3}
	\begin{itemize}
		\item[(i)] 	The map $\mathscr{G}$: $\mathcal{C}\to V_h^*$ is continuous.
		\item[(ii)] The map $\mathscr{G}\circ c \in L^2(\Omega; V_h^*).$
		\item[(iii)]  The antilinear functional $\mathcal{G}$ is well-defined on $L^2(\Omega ; V_h). $
	\end{itemize}
\end{lemm}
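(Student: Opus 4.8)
The plan is to follow exactly the same three-step pattern used for $\mathscr{B}$ in Lemma \ref{lemma5.2}, but now for the antilinear functional $\mathscr{G}$, which is simpler because $G_{(\phi,\psi)}$ depends on the second component $\psi$ linearly rather than through Jacobians. First I would prove (i): fix a base point, say $(f_0,g_0)\in\mathcal{C}$, and take a sequence $(f_m,g_m)\to(f_0,g_0)$ in $\mathcal{C}$, i.e. $\|f_m-f_0\|_{1,\infty}\to 0$ and $\|g_m-g_0\|_{H^1(S_h)^3}\to 0$. Writing the associated transforms $\mathcal{H}_m$ as in the proof of Lemma \ref{lemma5.2}, I would estimate
\begin{align*}
G_{(f_m,g_m)}(v)-G_{(f_0,g_0)}(v)
&=-\int_{S_h}(g_m-g_0)\cdot\bar v\,\det\mathcal{J}_{\mathcal{H}_m}\,\mathrm{d}x
 -\int_{S_h}g_0\cdot\bar v\,(\det\mathcal{J}_{\mathcal{H}_m}-1)\,\mathrm{d}x.
\end{align*}
Using $\det\mathcal{J}_{\mathcal{H}_m}=1+O(\|f_m-f_0\|_{1,\infty})$ (established in Lemma \ref{lemma5.2}) together with the Cauchy–Schwarz inequality, both integrals are bounded by $C\big(\|g_m-g_0\|_{L^2(S_h)^3}+\|g_0\|_{L^2(S_h)^3}\|f_m-f_0\|_{1,\infty}\big)\|v\|_{V_h}$, hence $\|\mathscr{G}((f_m,g_m))-\mathscr{G}((f_0,g_0))\|_{V_h^*}\to 0$. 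Since the base point was arbitrary, $\mathscr{G}$ is continuous on $\mathcal{C}$.

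For (ii), I would combine the continuity of $\mathscr{G}$ just proved with the measurability and $\mathbb{P}$-essential separability of $c$ (guaranteed by Condition \ref{condition1} via the preceding lemma): the composition of a continuous map with a measurable, $\mathbb{P}$-essentially separably valued map is strongly measurable (Pettis's theorem). To get the $L^2$ integrability of $\mathscr{G}\circ c$, I would bound $\|G_{c(\eta)}\|_{V_h^*}\le C\|\tilde g(\eta)\|_{L^2(S_h)^3}\le C\|\tilde g(\eta)\|_{H^1(S_h)^3}$, uniformly in $\eta$ (the constant involving only the fixed bound $M_0$ on $\det\mathcal{J}_{\mathcal{H}}$), so that $\int_\Omega\|\mathscr{G}(c(\eta))\|_{V_h^*}^2\,\mathrm{d}\mathbb{P}(\eta)\le C\,\|c_2\|^2_{L^2(\Omega;\mathcal{C}_2)}<\infty$ by Condition \ref{condition1}. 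This gives $\mathscr{G}\circ c\in L^2(\Omega;V_h^*)$.

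Finally (iii) follows from (ii) by a Cauchy–Schwarz argument in $\eta$: for $v\in L^2(\Omega;V_h)$,
\[
|\mathcal{G}(v)|=\Big|\int_\Omega G_{c(\eta)}(v(\eta))\,\mathrm{d}\mathbb{P}(\eta)\Big|
\le \int_\Omega \|\mathscr{G}(c(\eta))\|_{V_h^*}\,\|v(\eta)\|_{V_h}\,\mathrm{d}\mathbb{P}(\eta)
\le \|\mathscr{G}\circ c\|_{L^2(\Omega;V_h^*)}\,\|v\|_{L^2(\Omega;V_h)},
\]
so $\mathcal{G}$ is a bounded antilinear functional on $L^2(\Omega;V_h)$, hence well-defined. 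I expect no serious obstacle here; the only mild subtlety is ensuring the constants in the estimates of (i) and (ii) are genuinely $\eta$-uniform, which relies on the a priori bounds $m<f(\eta)<M$ and $\|f(\eta)-f_0\|_{1,\infty}\le M_0$ controlling $\mathcal{H}$ and its Jacobian uniformly; one also invokes $g(\eta)\in H^1(D(\eta))^3$ with support in $S_h(\eta)$ to guarantee $\tilde g(\eta)\in H^1_0(S_h)^3=\mathcal{C}_2$, so that the input map $c$ really takes values in $\mathcal{C}$. As in Lemma \ref{lemma5.2}, the details of (ii) and (iii) can be carried out exactly as in the two-dimensional case treated in \cite{WLX}.
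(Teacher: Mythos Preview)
Your proposal is correct and follows precisely the approach the paper intends: the paper itself omits the proof entirely, stating only that it is ``similar to Lemma 4.3 in \cite{WLX}'' (the two-dimensional analogue), and your three-step argument---continuity via the Jacobian expansion from Lemma \ref{lemma5.2}, strong measurability plus a uniform $V_h^*$-bound for $L^2$-integrability, and Cauchy--Schwarz for well-definedness---is exactly that argument spelled out in detail. Your remark that the constants are $\eta$-uniform thanks to the bounds $m<f(\eta)<M$ and $\|f(\eta)-f_0\|_{1,\infty}\le M_0$ is the only point requiring care, and you have identified it correctly.
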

The proof is similar to Lemma 4.3 in \cite{WLX}.
For any given sampling $\eta$, we consider the following deterministic \it Variational Problem III.

Find $u(\eta) \in V_h$ such that
\begin{equation}\label{eq6.9}
	B_{c(\eta)}(u(\eta),v)=G_{c(\eta)}(v),\quad \forall v \in V_h.\end{equation}

The existence and uniqueness of solutions of the problem \eqref{eq6.9} has been given in Theorem \ref{thm}. The \textit{a priori} bound in Lemma \ref{lemma7} can also be used for \eqref{eq6.9}. Notice that for any $\eta$ we have the upper bound \[
L(\eta) \le L+M_0.
\] 
\begin{lemma}\label{lemma_random}
	For any given $\eta$, the variational problem \eqref{eq6.9} admits a unique solution $u(\eta) \in V_h$.
Moreover, the \textit{a priori} bound\[\|u^*(\eta)\|_{H^1(
		S_h(\eta))^3}
	\le (h-m+2)(C_4(\omega, h )+C_5(\omega, h )+C_6(\omega, h,L_0) ) \|g(\eta)\|_{H^1(S_h(\eta))^3}
	\] holds for $u^*(\eta)=u(\eta)\circ \mathcal{H}^{-1}$ with $L_0=M_0+L$.
\end{lemma}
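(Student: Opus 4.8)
The plan is to reduce \eqref{eq6.9}, via the change of variables $\mathcal{H}$, to a deterministic scattering problem of the type treated in the previous sections, and then to transfer both well-posedness and the explicit bound. For fixed $\eta$, recall that by the truncation condition \eqref{eq6.3} and the choice of $\delta$ in \eqref{eq6.4} --- which together force $|J_3|<1$ and hence make $\mathcal{J}_{\mathcal{H}}$ everywhere invertible --- the map $\mathcal{H}$ is a bi-Lipschitz homeomorphism of $S_h$ onto $S_h(\eta)$ fixing $\Gamma_h$. Consequently $v\mapsto v\circ\mathcal{H}$ is a linear isomorphism of $V_h(\eta)$ onto $V_h$, and, in view of the identities $\tilde B_\eta(u,v)=B_{c(\eta)}(u\circ\mathcal{H},v\circ\mathcal{H})$ and $\tilde G_\eta(v)=G_{c(\eta)}(v\circ\mathcal{H})$ established above, $u(\eta)\in V_h$ solves \eqref{eq6.9} if and only if $u^*(\eta):=u(\eta)\circ\mathcal{H}^{-1}\in V_h(\eta)$ satisfies $\tilde B_\eta(u^*(\eta),v)=\tilde G_\eta(v)$ for all $v\in V_h(\eta)$. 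Since the DtN operator $\mathcal{T}$ on $\Gamma_h$ is independent of the surface, this last problem is precisely {\it Variational Problem I} with $\Gamma$ replaced by $\Gamma(\eta)$, $S_h$ by $S_h(\eta)$, and $g$ by $g(\eta)$.

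Next I would check that the hypotheses of Theorem \ref{thm} hold uniformly in $\eta$: $\Gamma(\eta)$ is the graph of a Lipschitz function with constant $L(\eta)\le L+M_0=:L_0$ (since $\|f(\eta)-f_0\|_{1,\infty}\le M_0$), it lies within a finite distance of $\{x_3=0\}$ because $m<f(\eta;\cdot)<M$ for every $\eta$, and $g(\eta)\in V_h(\eta)$ (indeed $\tilde g(\eta)\in H^1_0(S_h)^3$, as noted above). Theorem \ref{thm} then produces a unique $u^*(\eta)\in V_h(\eta)$, hence a unique $u(\eta)=u^*(\eta)\circ\mathcal{H}\in V_h$ solving \eqref{eq6.9}. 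Applying the {\it a priori} estimate of Theorem \ref{lemma5} to $u^*(\eta)$ on $S_h(\eta)$, and using that the $V_h(\eta)$-norm coincides with the $H^1(S_h(\eta))^3$-norm, gives
\[
\|u^*(\eta)\|_{H^1(S_h(\eta))^3}\le (h-m+2)\big(C_4(\omega,h)+C_5(\omega,h)+C_6(\omega,h,L(\eta))\big)\,\|g(\eta)\|_{H^1(S_h(\eta))^3}.
\]
Now $C_4$ and $C_5$ do not involve the Lipschitz constant, whereas $C_6=C(\omega^{-1}+1)C_1 C_2^2$ depends on it only through $C_1(\omega,h,L)\,C_2(\omega,h,L)^2$, which is proportional to $(1+L^2)^{1/2}\cdot(1+L^2)^{1/2}=1+L^2$ and hence nondecreasing in $L\ge 0$. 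Since $L(\eta)\le L_0$, we may replace $C_6(\omega,h,L(\eta))$ by $C_6(\omega,h,L_0)$, which yields the asserted estimate.

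The one step that is not mere bookkeeping is the reduction itself: it is essential that $\mathcal{H}$ maps $V_h(\eta)$ \emph{onto} $V_h$, so that the change of variables turns \eqref{eq6.9} into an \emph{equivalent} variational problem rather than only a sufficient condition --- this is exactly the role of \eqref{eq6.3}. Once that is in hand, the existence, uniqueness and the frequency- and geometry-explicit bound all follow by transporting Theorems \ref{thm} and \ref{lemma5} through $\mathcal{H}$ and absorbing the $\eta$-dependence of the Lipschitz constant into $L_0$ by the monotonicity of $C_6$.
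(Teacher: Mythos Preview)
Your proposal is correct and follows essentially the same approach as the paper: reduce Variational Problem III to Variational Problem I for the surface $\Gamma(\eta)$ via the bi-Lipschitz change of variables $\mathcal{H}$, then invoke Theorem~\ref{thm} for existence and uniqueness and Theorem~\ref{lemma5} for the explicit bound. Your version is in fact more careful than the paper's, as you spell out why $\mathcal{H}$ is an isomorphism between $V_h(\eta)$ and $V_h$, verify the hypotheses of Theorem~\ref{thm} for $\Gamma(\eta)$, and justify replacing $L(\eta)$ by $L_0$ via the monotonicity of $C_6$ in $L$.
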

\begin{proof}
If $u(\eta)$ is a solution to \textcolor{black}{\it Variational Problem III} \eqref{eq6.9}, then $u^*(\eta)=u(\eta)\circ \mathcal{H}^{-1}$ is solution to \textcolor{black}{\it Variational Problem I} \eqref{3.16} corresponding to $f(\eta)$ and $g(\eta)$. Conversely, if $u(\eta)$ is solution to \textcolor{black}{\it Variational Problem I} \eqref{3.16} corresponding to $f(\eta)$ and $g(\eta)$, then $\tilde{u}(\eta)=u(\eta)\circ \mathcal{H}$ is solution to \textcolor{black}{\it Variational Problem III} \eqref{eq6.9}. So Theorem \ref{thm} implies existence and uniqueness of solutions to the variation problem (\ref{eq6.8}), and Theorem \ref{lemma5} implies the \textit{a priori} bound.
\end{proof}
Lemma \ref{lemma_random} shows the existence of a solution $u(\eta)$ to \eqref{eq6.9} for given $\eta$. In fact, the following lemma shows $u(\eta) \in L^2(\Omega; V_h)$.
\begin{lemm}\label{lemma5.5}
	For the solution $u(\eta)$ to \textcolor{black}{\it Variational Problem III} \eqref{eq6.9}, we have $u(\eta) \in L^2(\Omega; V_h)$.
\end{lemm}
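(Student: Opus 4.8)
The plan is to reduce the statement to two independent facts: (a) the solution map $\eta\mapsto u(\eta)$ is strongly measurable from $\Omega$ into $V_h$, and (b) $\int_\Omega \|u(\eta)\|_{V_h}^2\,\mathrm{d}\mathbb{P}(\eta)<\infty$. Since $V_h$ is the closure of $C_0^\infty(S_h\cup\Gamma_h)^3$ in a separable Hilbert space it is itself separable, so by Pettis' theorem strong measurability is equivalent to Borel measurability, and the Bochner space $L^2(\Omega;V_h)$ is characterised exactly by (a) together with (b). I would state this reduction first and then treat (a) and (b) in turn.

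For (a) I would express $u(\eta)$ through the abstract solution operator. By \textit{Variational Problem III} \eqref{eq6.9}, $u(\eta)$ is the unique element of $V_h$ with $\mathscr{B}(c(\eta))\,u(\eta)=\mathscr{G}(c(\eta))$ in $V_h^*$; by Theorem \ref{thm}, applied after the pull-back as in Lemma \ref{lemma_random}, the operator $\mathscr{B}(\phi,\psi)\in B(V_h,V_h^*)$ is boundedly invertible for every $(\phi,\psi)\in\mathcal{C}$, hence $u(\eta)=\mathscr{B}(c(\eta))^{-1}\mathscr{G}(c(\eta))$. The input map $c$ is measurable and $\mathbb{P}$-essentially separable (the lemma following Condition \ref{condition1}); $\mathscr{B}$ and $\mathscr{G}$ are continuous on $\mathcal{C}$ by Lemma \ref{lemma5.2}(i) and Lemma \ref{lemma5.3}(i); and inversion $T\mapsto T^{-1}$ is continuous on the open set of invertible operators in $B(V_h,V_h^*)$. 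Composing these maps shows $\eta\mapsto u(\eta)$ is measurable. As an alternative I would note that one may instead approximate $c$ by $\mathcal{C}$-valued simple functions, using the $\mathbb{P}$-essential separability, solve the finitely many resulting deterministic problems, and pass to the limit via the continuous dependence on the data, the uniform bound below legitimising the limit.

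For (b) I would invoke the \textit{a priori} estimate of Lemma \ref{lemma_random} with $L_0=M_0+L$, which is independent of $\eta$: it gives a bound on $\|u^*(\eta)\|_{H^1(S_h(\eta))^3}$ by $(h-m+2)\big(C_4(\omega,h)+C_5(\omega,h)+C_6(\omega,h,L_0)\big)\|g(\eta)\|_{H^1(S_h(\eta))^3}$, where $u^*(\eta)=u(\eta)\circ\mathcal{H}^{-1}$. Because $\|f(\eta)-f_0\|_{1,\infty}\le M_0$ and the cut-off $\alpha$ obeys \eqref{eq6.4}, the Jacobian of $\mathcal{H}$ and of its inverse are bounded above and away from zero by constants depending only on $M_0,\gamma,\delta$; the change of variables $x=\mathcal{H}(y)$ then yields, with an $\eta$-independent constant $C$,
\[
\|u(\eta)\|_{V_h}\le C\,\|u^*(\eta)\|_{H^1(S_h(\eta))^3},\qquad \|g(\eta)\|_{H^1(S_h(\eta))^3}\le C\,\|\tilde g(\eta)\|_{H^1(S_h)^3}.
\]
Chaining the three inequalities gives $\|u(\eta)\|_{V_h}\le C_*\,\|\tilde g(\eta)\|_{H^1(S_h)^3}=C_*\,\|c_2(\eta)\|_{\mathcal{C}_2}$ with $C_*$ independent of $\eta$, and squaring and integrating over $\Omega$ while using $c_2\in L^2(\Omega;\mathcal{C}_2)$ from Condition \ref{condition1} gives $\int_\Omega\|u(\eta)\|_{V_h}^2\,\mathrm{d}\mathbb{P}\le C_*^2\,\|c_2\|_{L^2(\Omega;\mathcal{C}_2)}^2<\infty$. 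Combined with (a), this shows $u\in L^2(\Omega;V_h)$.

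The only genuine obstacle is the measurability claim (a): one must make sure the operator-inversion step is valid for \emph{every} sample, which is precisely what Theorem \ref{thm} (through Lemma \ref{lemma_random}) guarantees once the problem is transported to the fixed space $V_h$, and that the composition of the measurable map $c$ with the continuous maps $\mathscr{B},\mathscr{G}$ and with operator inversion remains measurable. The change-of-variables bookkeeping in (b) is routine, provided one is careful to keep all constants independent of $\eta$.
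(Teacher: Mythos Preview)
Your proposal is correct and follows precisely the approach one expects here: the paper omits the proof and refers to Lemma 4.4 in \cite{WLX}, whose argument is exactly the two-step verification of strong measurability (via the continuity of $\mathscr{B}$, $\mathscr{G}$ and of operator inversion, composed with the measurable, $\mathbb{P}$-essentially separably valued input $c$) together with square integrability from the $\eta$-uniform \textit{a priori} bound and Condition \ref{condition1}. Your change-of-variables bookkeeping, keeping the Jacobian bounds independent of $\eta$ thanks to $\|f(\eta)-f_0\|_{1,\infty}\le M_0$, is the correct way to transfer the estimate of Lemma \ref{lemma_random} from $S_h(\eta)$ back to the fixed domain $S_h$.
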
 The proof is omitted here since it is similar to the two-dimensions case in Lemma 4.4 in \cite{WLX}. Based on Lemmas \ref{lemma5.2} - \ref{lemma5.5},  we can conclude the well-posedness of \eqref{eq6.8} in the framework of \cite{PS, BLX, WLX} and extend the  \textit{a priori} bound to random case as follows.
\begin{theo} \label{thm_random}
	\begin{itemize}
		\item [(i)] The Variational Problem  II \eqref{eq6.8} admits a unique solution $u\in L^2(\Omega, V_h)$.
		\item [(ii)] Let $u\in V_h(\eta)$ be a solution to the Variation Problem I \eqref{3.16} corresponding to $f(\eta)$ and $g(\eta)$ with $\eta \in \Omega$, and let $\tilde{u}(\eta)\in L^2(\Omega;V_h)$ be the solution to the Variational Problem II \eqref{eq6.8}. Then $u$ and $\tilde{u}$  satisfy respectively the bound \begin{align*}
			&\int_\Omega\|u\|^2_{H^1(S_h(\eta))^3}\mathrm{d}\,\mathbb{P}\\ &\le (h-m+2)^2(C_4(\omega, h )+C_5(\omega, h )+C_6(\omega, h,L_0) )^2\int_{\Omega}\|g\|^2_{H^1(S_h(\eta))^3}\mathrm{d}\,\mathbb{P},
		\end{align*} and
		\begin{align*} &\int_\Omega\|\tilde{u}\|^2_{H^1(S_h)^3}\mathrm{d}\,\mathbb{P}\\ &\le (h-m+2)^2(C_4(\omega, h )+C_5(\omega, h )+C_6(\omega, h,L_0) )^2\int_{\Omega}\|\tilde{g}\|^2_{H^1(S_h)^3}\mathrm{d}\,\mathbb{P}.
		\end{align*}
	\end{itemize}
\end{theo}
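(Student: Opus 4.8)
The plan is to obtain (i) by combining the pointwise (in $\eta$) solvability of \textit{Variational Problem III} \eqref{eq6.9} with the measurability and $\mathbb{P}$-essential separability structure supplied by Lemmas \ref{lemma5.2}--\ref{lemma5.5}, in the abstract framework of \cite{PS,BLX,WLX}, and to obtain (ii) by integrating the deterministic a priori bound of Lemma \ref{lemma_random} over $\Omega$, once it is observed that that bound is uniform in $\eta$. For existence in (i): for each $\eta\in\Omega$ let $u(\eta)\in V_h$ be the unique solution of \eqref{eq6.9}, which exists by Lemma \ref{lemma_random}; by Lemma \ref{lemma5.5} the map $\eta\mapsto u(\eta)$ belongs to $L^2(\Omega;V_h)$. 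Given any $v\in L^2(\Omega;V_h)$ one has $v(\eta)\in V_h$ for $\mathbb{P}$-a.e.\ $\eta$, so $B_{c(\eta)}(u(\eta),v(\eta))=G_{c(\eta)}(v(\eta))$ for a.e.\ $\eta$; both sides are $\mathbb{P}$-integrable by Lemmas \ref{lemma5.2}(ii)--(iii) and \ref{lemma5.3}(ii)--(iii), and integrating against $\mathrm{d}\mathbb{P}$ gives $\mathcal{B}(u,v)=\mathcal{G}(v)$, i.e.\ $u$ solves \eqref{eq6.8}.

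For uniqueness in (i): suppose $\mathcal{B}(u,v)=0$ for all $v\in L^2(\Omega;V_h)$. Fix $w$ in a countable dense subset of $V_h$ and a measurable $E\subset\Omega$; then $v=\chi_E\,w\in L^\infty(\Omega;V_h)\subset L^2(\Omega;V_h)$ and, since $\chi_E$ is real-valued, $\int_E B_{c(\eta)}(u(\eta),w)\,\mathrm{d}\mathbb{P}=\mathcal{B}(u,\chi_E w)=0$ for every such $E$; hence $B_{c(\eta)}(u(\eta),w)=0$ for a.e.\ $\eta$. By continuity of $B_{c(\eta)}(\cdot,\cdot)$ on $V_h\times V_h$ (Lemma \ref{lemma5.2}(i)) and density, $B_{c(\eta)}(u(\eta),w)=0$ for a.e.\ $\eta$ and all $w\in V_h$, so $u(\eta)=0$ for a.e.\ $\eta$ by the pointwise uniqueness in Lemma \ref{lemma_random}, i.e.\ $u=0$ in $L^2(\Omega;V_h)$. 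Equivalently, all hypotheses of the abstract well-posedness result of \cite{PS} are now in place: $c$ is measurable and $\mathbb{P}$-essentially separable under Condition \ref{condition1}, $\mathscr{B}$ is continuous, $\mathscr{B}\circ c\in L^\infty(\Omega;B(V_h,V_h^*))$, $B_{c(\eta)}$ is invertible for every $\eta$ with an $\eta$-uniform bound, and $\mathscr{G}\circ c\in L^2(\Omega;V_h^*)$, which delivers (i).

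For (ii), the decisive point is that the constant in Lemma \ref{lemma_random} is independent of $\eta$: since $L(\eta)\le L+M_0=L_0$ for every $\eta$ and $C_6(\omega,h,\cdot)$ is nondecreasing in its last argument, one may replace $C_6(\omega,h,L(\eta))$ by $C_6(\omega,h,L_0)$ uniformly. Writing $\kappa:=(h-m+2)\bigl(C_4(\omega,h)+C_5(\omega,h)+C_6(\omega,h,L_0)\bigr)$, Lemma \ref{lemma_random} yields, for every $\eta$,
\begin{align*}
\|u^*(\eta)\|_{H^1(S_h(\eta))^3}\le\kappa\,\|g(\eta)\|_{H^1(S_h(\eta))^3},\qquad u^*(\eta)=u(\eta)\circ\mathcal{H}^{-1},
\end{align*}
where $u^*(\eta)$ solves \textit{Variational Problem I} \eqref{3.16} for the data $f(\eta),g(\eta)$. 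Squaring and integrating over $\Omega$ against $\mathrm{d}\mathbb{P}$ gives the first bound of (ii); the right-hand side is finite because $\|g(\eta)\|_{H^1(S_h(\eta))^3}\le C\|\tilde g(\eta)\|_{H^1(S_h)^3}$, the Jacobian $\det\mathcal{J}_{\mathcal{H}}$ and its inverse being bounded above and below by constants depending only on $M_0,\gamma,\delta$, and $\tilde g\in L^2(\Omega;H^1_0(S_h)^3)$ by Condition \ref{condition1}. The second bound follows from the same change of variables $x=\mathcal{H}(y)$: it makes the $H^1$-norms over $S_h(\eta)$ and over $S_h$ comparable up to constants absorbed into the generic $C$ present in $C_4,C_5,C_6$, so that $\tilde u(\eta)=u^*(\eta)\circ\mathcal{H}$ --- the restriction to the sample $\eta$ of the \textit{Variational Problem II} solution --- satisfies $\|\tilde u(\eta)\|_{H^1(S_h)^3}\le\kappa\,\|\tilde g(\eta)\|_{H^1(S_h)^3}$, and squaring and integrating concludes.

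I do not expect a single deep estimate to be the obstacle; rather, the care lies in (a) legitimizing the $\eta$-uniform a priori bound, which hinges on $L(\eta)\le L_0$ together with the monotonicity of $C_6$ in the Lipschitz constant; (b) exchanging the $\mathbb{P}$-integration with the sesquilinear and antilinear forms so that the pointwise identity \eqref{eq6.9} upgrades to \eqref{eq6.8} and conversely --- precisely what Lemmas \ref{lemma5.2}--\ref{lemma5.3} and the measurability and $\mathbb{P}$-essential separability of $c$ provide; and (c) transferring the $H^1$ bounds between the random domain $S_h(\eta)$ and the fixed domain $S_h$ through $\mathcal{H}$ with constants uniform in $\eta$, using the uniform two-sided control of $\det\mathcal{J}_{\mathcal{H}}$ guaranteed by \eqref{eq6.3}--\eqref{eq6.4}.
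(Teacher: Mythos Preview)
Your proposal is correct and follows the same approach as the paper: the paper does not give a detailed proof of Theorem \ref{thm_random} but simply states that it follows from Lemmas \ref{lemma5.2}--\ref{lemma5.5} together with the abstract framework of \cite{PS,BLX,WLX}, and your argument is precisely a fleshed-out version of that route. Your explicit existence/uniqueness argument via pointwise solvability and test functions of the form $\chi_E w$, and your derivation of (ii) by integrating the $\eta$-uniform bound of Lemma \ref{lemma_random}, are exactly the steps implicit in the paper's one-line justification.
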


\section{Conclusion} \label{conclusion}

We establishes the well-posedness of the time-harmonic elastic scattering from general unbounded rough surfaces in three dimensions at an arbitrary frequency.
{\color{black}{\it A priori} bounds which are explicit dependent on the frequency and on the geometry of the rough surface are derived both for deterministic and random cases.} A possible continuation of this work
is to study the elastic scattering by incident plane waves, spherical or cylindrical waves. We hope to report the progress on these results in subsequent publications.

%
%

\end{document}